\theoremstyle{plain}
\newtheorem{theorem}[equation]{Theorem}
\newtheorem{lemma}[equation]{Lemma}
\newtheorem{corollary}[equation]{Corollary}
\theoremstyle{definition}
\newtheorem{definition}[equation]{Definition}
\newtheorem{example}[equation]{Example}
\theoremstyle{remark}
\newtheorem{remark}[equation]{Remark}
\numberwithin{equation}{section}
\newcommand{\N}{\mathbb{N}}
\newcommand{\R}{\mathbb{R}}
\newcommand{\Lipc}{\mathrm{Lip}_c}
\newcommand{\e}{\varepsilon}
\newcommand{\BVloc}{BV_{\mathrm{loc}}}
\newcommand{\Df}{\lVert Df\rVert}
\newcommand{\Dfk}{\lVert Df_k\rVert}
\newcommand{\dE}{\lVert \partial E\rVert}
\newcommand{\Dfw}{\lVert Df\rVert_w}
\newcommand{\Dfkw}{\lVert Df_k\rVert_w}
\newcommand{\Dfew}{\lVert Df_\varepsilon\rVert_w}
\newcommand{\dEw}{\lVert \partial E\rVert_w}
\newcommand{\mres}{\mathbin{\vrule height 1.6ex depth 0pt width
0.13ex\vrule height 0.13ex depth 0pt width 1.3ex}}
\DeclareMathOperator*{\essinf}{ess\,inf}
\def\div{\mathop{\operatorname{div}}\nolimits}
\def\l@subsection{\@tocline{2}{0pt}{2.5pc}{5pc}{}}
\newcommand{\joseph}[1]{\marginpar{\scriptsize \textbf{J:} #1}}
\begin{document}
\allowdisplaybreaks

\title[Weighted BV] {Weighted Bounded Variation Revisited}
\author{Simon Bortz}
\address{Simon Bortz\\
         Department of Mathematics\\
         University of Alabama\\
         Tuscaloosa, AL 35487, USA}
\email{sbortz@ua.edu}

\author{Matthew Gossett}
\address{Matthew Gossett\\
         Department of Mathematics\\
         University of Alabama\\
         Tuscaloosa, AL 35487, USA}
\email{mlgossett@crimson.ua.edu}
\author{Joseph Kasel}
\address{Joseph Kasel\\
         Department of Mathematics\\
         University of Alabama\\
         Tuscaloosa, AL 35487, USA}
\email{jekasel@crimson.ua.edu}

\author{Kabe Moen}
\address{Kabe Moen\\
         Department of Mathematics\\
         University of Alabama\\
         Tuscaloosa, AL 35487, USA}
\email{kabe.moen@ua.edu}

\begin{abstract}
In this article, we investigate the theory of weighted functions of bounded variation (BV), as introduced by Baldi \cite{B}. Depending on the theorem, we impose lower semicontinuity and/or a pointwise $A_1$ condition on the weight. Our motivation is twofold: to establish weighted Gagliardo-Nirenberg-Sobolev (GNS) inequalities for BV functions, and to clarify and extend earlier results on weighted BV spaces. 

Our main contributions include a structure theorem under minimal assumptions (lower semicontinuity), a smooth approximation result, an embedding theorem, a weighted GNS inequality for BV functions, and a corresponding weighted isoperimetric inequality.
\end{abstract}

\thanks{Some of this work was completed during a local REU at the University of Alabama in Summer 2025. The authors would like to thank the UA Mathematics Department for financially supporting the REU. S.B. was supported by the Simons Foundation’s Travel Support for Mathematicians program and AWI-CONSERVE. M.G. and J.K. would like to thank UA for supporting them with an undergraduate research stipend.}

\maketitle
\tableofcontents

\section{Introduction}

The purpose of this article is to investigate spaces of functions of bounded variation under a change of measure. Recall that, roughly speaking, the space of bounded variation consists of functions whose distributional derivatives are Radon measures. Compared with Sobolev spaces, $BV$ spaces offer a more flexible framework, as they accommodate functions of a more singular nature—for instance, $f = \chi_E$ when $E$ has finite perimeter. $BV$ spaces have broad applications: they provide generalized solutions to certain PDEs and play a central role in the theory of surface measure and isoperimetric inequalities (see \cite{AFP},\cite{EG},\cite{G84}). The theory of $BV$ functions also plays a fundamental role in total variation denoising and in the Mumford–Shah functional, both of which are instrumental in various aspects of image processing and segmentation. For further applications, we refer the reader to \cite{HV75}.

In this work, we study the weighted space $BV(w)$ associated with a weight $w$, which arises naturally as an extension of the weighted Sobolev space $W^{1,1}(w)$. Weighted $BV$ spaces have been considered by several authors; in particular, we emphasize the contributions of Baldi \cite{B} and Camfield \cite{C}. While \cite{B} is a well-cited reference, our aim is to refine and extend the existing theory, filling in gaps to provide a more complete framework. Specifically, we present a systematic treatment of sets of finite $w$-perimeter, establish density theorems, and apply these results to GNS and isoperimetric inequalities. Our structure theorems differ in important respects from those of Baldi, and we place particular emphasis on the role of the weight by distinguishing between the case where w is merely lower semicontinuous and the stronger setting in which w is both lower semicontinuous and belongs to $A_1$, a class we denote by $A_1^*$; see Section \ref{A1weights} for precise definitions.

{On the other hand, Camfield \cite{C} provides many valuable contributions to the study of weighted $BV$ functions. For example, Camfield proves a characterization of functions of bounded weighted variation (see \cite[Theorem 2.1.5]{C}). We obtain this result using a different approach, but slightly expand on Camfield's result by providing characterizations of functions of locally bounded weighted variation and of functions of bounded weighted variation when the weight is bounded away from zero. Moreover, Camfield proves a smooth approximation result for Lipschitz weights (see \cite[Theorem 4.1.6]{C}). However, we prove a much more general smooth approximation result (see Theorem \ref{thm:EG5.3analogue} and Remark \ref{rmk:wapprox}) that subsumes Camfield's result.}

{Our main contributions also include several significant improvements to the existing literature, in particular \cite{B}. First, we revisit the claimed generality of the results in \cite{B}, which are stated for $A_1^*$ weights. In particular, the approximation argument in \cite{B} is not fully transparent: it is unclear whether the natural Lipschitz approximations of lower semicontinuous functions preserve membership in $A_1^*$, whereas Baldi works initially with weights $w \in \Lipc \cap A_1^*$ and then extends to general $A_1^*$ weights. 

We establish a global Gagliardo--Nirenberg--Sobolev inequality for weighted $BV$ functions and arbitrary $A_1^*$ weights (see Theorem \ref{thm:GNSforweightedBV}), without the additional assumptions imposed in \cite{B}. In contrast, \cite[Theorem 4.2(ii)]{B} also proves a Gagliardo--Nirenberg--Sobolev inequality, but under stronger and somewhat unnatural restrictions on the weight (see Remark \ref{Baldifalse}).

We further obtain a global isoperimetric inequality for sets of finite weighted perimeter under arbitrary $A_1^*$ weights (see Corollary \ref{cor:globalweightedisoperimetric}). Finally, we show that the space of weighted $BV$ functions admits an isometric embedding into the classical unweighted $BV$ space in one higher dimension.}

\subsection{Main Results}

Our first main result is a structure theorem analogous the unweighted structure theorem \cite[Theorem 5.1]{EG}. Compare \cite[Theorem 3.3]{B}, although Baldi restricts to the case of $A_1^*$ weights while we consider weights that are merely positive and lower semicontinuous.

\begin{theorem}[Structure Theorem for $\BVloc(\Omega;w)$]
\label{thm:structuretheorem}
    Let $w:\R^n\to(0,\infty]$ be lower semicontinuous, $f\in BV_\mathrm{loc}(\Omega;w)$. Then, there exist a Radon measure $\Dfw$ and a $\Dfw$-measurable function $\nu:\Omega\to\R^n$ such that
    \begin{enumerate}[(i)]
        \item $|\nu(x)|=1$ $\Dfw$-a.e., and
        \item for all $\varphi\in\Lipc(\Omega;\R^n)$,
        \[
            \int_\Omega f\div\varphi\,dx=-\int_\Omega(\varphi\cdot\nu)\,\frac{1}{w}\,d\Dfw.
        \]
    \end{enumerate}
    In particular, $d\Dfw=w\,d\Df$.
\end{theorem}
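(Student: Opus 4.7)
My approach is to deduce the weighted structure theorem from the classical (unweighted) one \cite[Theorem~5.1]{EG}. Because $w$ is lower semicontinuous and strictly positive, on every compact $K\subset\Omega$ the infimum $\inf_K w$ is attained and is strictly positive. Rescaling any Lipschitz test field with $|\varphi|\le 1$ by this factor produces an admissible competitor for the weighted variation, so $\BVloc(\Omega;w)\subset\BVloc(\Omega)$ in the classical sense. Applying the unweighted structure theorem to $f\in \BVloc(\Omega)$ then yields a Radon measure $\Df$ and a $\Df$-measurable unit vector field $\nu:\Omega\to\R^n$ with $\int f\div\varphi\,dx = -\int\varphi\cdot\nu\,d\Df$ for every $\varphi\in\Lipc(\Omega;\R^n)$.

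With $\Df$ and $\nu$ in hand, I would define $d\Dfw:=w\,d\Df$. This is a well-defined Borel measure (since $w$ is Borel as an LSC function), and it is locally finite---hence a Radon measure---by the hypothesis $f\in\BVloc(\Omega;w)$. Assertion (i) follows because $\Dfw\ll\Df$ with density $w>0$, so $|\nu|=1$ holds $\Dfw$-a.e. The integration-by-parts formula in (ii) is then the chain
\[
-\int(\varphi\cdot\nu)\,\tfrac{1}{w}\,d\Dfw \;=\; -\int(\varphi\cdot\nu)\,\tfrac{1}{w}\cdot w\,d\Df \;=\; -\int\varphi\cdot\nu\,d\Df \;=\; \int f\div\varphi\,dx,
\]
and the final identification $d\Dfw=w\,d\Df$ is by construction.

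The genuine technical content---and the place where lower semicontinuity of $w$ does work beyond the compact lower bound---lies in verifying that this constructed $\Dfw$ agrees with the paper's a priori (Baldi-style) definition of the weighted total variation, namely $\sup\{\int f\div\varphi\,dx:\varphi\in\Lipc(U;\R^n),\,|\varphi|\le w\}$. The $\le$ inequality is immediate from the unweighted representation, since $|\varphi|\le w$ forces $|\varphi\cdot\nu|\le w$. For the reverse $\ge$ inequality, I would approximate $w$ from below by a monotone sequence of Lipschitz functions $w_k\uparrow w$ (available because $w$ is LSC) and approximate $\nu$ in $L^1_{loc}(\Df)$ by smooth vector fields $\sigma_k$ with $|\sigma_k|\le 1$; the competitors $\varphi_k:=w_k\sigma_k\in\Lipc$ satisfy $|\varphi_k|\le w$ and yield $\int f\div\varphi_k\,dx\to\int w\,d\Df$ via monotone/dominated convergence. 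This monotone-from-below approximation is the step that makes essential use of lower semicontinuity; notably, no stronger $A_1$-type hypothesis on $w$ is required.
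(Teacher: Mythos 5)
Your proposal is correct and follows essentially the same route as the paper: the paper first proves (Theorem \ref{thm:varwmeasure}) that $\Dfw(V)=\int_V w\,d\Df$ by combining the trivial upper bound $|\varphi\cdot\nu|\le w$ with a lower bound obtained from competitors of the form $u\,\overline\nu_{\e,\delta}$, where $\overline\nu_{\e,\delta}$ is a Lusin--mollification approximation of $\nu$ bounded by $1$ and $u\le w$ runs over a monotone sequence of compactly supported Lipschitz functions increasing to the lower semicontinuous $w$, and then reads off the structure theorem by substituting $d\Dfw=w\,d\Df$ into the unweighted one. Your product competitors $w_k\sigma_k$ and the monotone-convergence passage are exactly this argument (modulo the sign of $\sigma_k$ and exhausting $\Omega$ by sets $V\Subset\Omega$), so there is no substantive difference.
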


As is the case with any function space, we want to show that a collection of ``nicer'' functions approximates functions in our space. In the case of classical BV functions, smooth functions can be used as approximating functions (see \cite[Theorem 5.3]{EG}). We prove a similar theorem in the case of weighted BV functions, although the presence of the weight can cause problems. As a result, we impose an additional condition, the so-called $w$-approximability condition (see Definition \ref{def:approximable}), to ensure we can obtain the desired convergence.

\begin{theorem}[Approximation by Smooth Functions]
\label{thm:EG5.3analogue}
    Let $w\in A_1^*$, $f\in BV(\Omega;w)$.
    \begin{enumerate}[(i)]
        \item If $f$ is $w$-approximable (see Definition \ref{def:approximable}), then there exists a sequence $\{f_k\}_{k=1}^\infty\subseteq C^\infty(\Omega)\cap BV(\Omega;w)$ such that $f_k\to f$ in $L^1(\Omega;w)$ and
        \[
            \lim_{k\to\infty}\Dfkw(\Omega)=\Dfw(\Omega).
        \]
        \item If $f$ is not $w$-approximable, then there exists a sequence $\{f_k\}_{k=1}^\infty\subseteq C^\infty(\Omega)\cap BV(\Omega;w)$ such that $f_k\to f$ in $L^1(\Omega;w)$ and
        \[
            \Dfw(\Omega)\leq\lim_{k\to\infty}\Dfkw(\Omega)\leq [w]_{A_1}\Dfw(\Omega).
        \]
    \end{enumerate}
\end{theorem}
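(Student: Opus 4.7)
The plan is to adapt the Meyers-Serrin construction from the unweighted proof (cf.\ \cite[Theorem 5.3]{EG}) to the weighted setting, tracking the interaction between mollification and the weight $w$. Fix a standard exhaustion $\Omega_j\cc\Omega_{j+1}\cc\Omega$ with $\bigcup_j\Omega_j=\Omega$, choose an open cover $\{U_j\}$ with $U_j\supset\overline{\Omega_j}\setminus\Omega_{j-1}$ and $U_j\cc\Omega_{j+1}\setminus\overline{\Omega_{j-2}}$, and let $\{\eta_j\}$ be a smooth partition of unity subordinate to $\{U_j\}$. Given $\e>0$, select mollification scales $\e_j>0$ small enough that $\operatorname{supp}((f\eta_j)*\phi_{\e_j})\subset U_j$ and that both $\|(f\eta_j)*\phi_{\e_j}-f\eta_j\|_{L^1(\Omega;w)}$ and $\|(f\nabla\eta_j)*\phi_{\e_j}-f\nabla\eta_j\|_{L^1(\Omega;w)}$ are at most $\e\,2^{-j}$. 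Set $f_\e:=\sum_j(f\eta_j)*\phi_{\e_j}\in C^\infty(\Omega)$; the construction ensures $f_\e\to f$ in $L^1(\Omega;w)$ as $\e\to 0$.

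The central analytic input is a Fubini-type comparison. Because $w$ is lower semicontinuous and satisfies the pointwise $A_1^*$ bound $\fint_B w\le[w]_{A_1}w(y)$ for every $y\in B$, a radial, radially decreasing mollifier $\phi_\e$ satisfies
\[
\int\phi_\e(x-y)w(x)\,dx\le[w]_{A_1}w(y)\qquad\text{for every }y\in\Omega\text{ and small }\e,
\]
by layer-cake decomposition of $\phi_\e$ into indicators of balls. Via Fubini, this gives the transplantation estimates
\[
\int_\Omega (|g|*\phi_\e)w\,dx\le[w]_{A_1}\int_\Omega|g|w\,dx,\qquad\int_\Omega|\vec\mu*\phi_\e|w\,dx\le[w]_{A_1}\int_\Omega w\,d|\vec\mu|
\]
for $g\in L^1(\Omega;w)$ and for vector-valued Radon measures $\vec\mu$. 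Using the product-rule identity $D((f\eta_j)*\phi_{\e_j})=(\eta_j\,Df)*\phi_{\e_j}+(f\nabla\eta_j)*\phi_{\e_j}\,\mathcal{L}^n$, summing in $j$, and exploiting $\sum_j\eta_j\equiv 1$ together with $\sum_j f\nabla\eta_j\equiv 0$ on $\Omega$, the leading piece $\sum_j(\eta_j Df)*\phi_{\e_j}$ is controlled by $[w]_{A_1}\Dfw(\Omega)$ through the vector-measure estimate above, while the remainder is bounded in $L^1(\Omega;w)$ by $\sum_j\e\,2^{-j}=\e$. Sending $\e\to 0$ yields $\limsup_k\Dfkw(\Omega)\le[w]_{A_1}\Dfw(\Omega)$.

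The matching lower bound $\Dfw(\Omega)\le\liminf_k\Dfkw(\Omega)$ comes from $L^1(\Omega;w)$-lower semicontinuity of the weighted total variation, which follows by testing $\vec\varphi\in\Lipc(\Omega;\R^n)$ with $|\vec\varphi|\le w$ against the identity of Theorem \ref{thm:structuretheorem} and passing to the limit. This settles case (ii). In case (i), the $w$-approximability hypothesis (Definition \ref{def:approximable}) is engineered precisely to remove the $[w]_{A_1}$ loss in the Fubini estimate---roughly, to enforce that $\int\phi_\e(x-\cdot)w(x)\,dx\to w(\cdot)$ in a sense compatible with $d|Df|$ (a Lebesgue-point condition adapted to the singular part of $Df$), so that the leading term in the upper-bound argument converges to $\Dfw(\Omega)$ rather than to $[w]_{A_1}\Dfw(\Omega)$. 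The main obstacle is reconciling the partition-of-unity cancellation with the Fubini step at the limit: one must verify that the $\nabla\eta_j$ remainder genuinely vanishes and that the pointwise (not merely a.e.) $A_1^*$ bound---reinforced, in case (i), by $w$-approximability---is strong enough to saturate the leading estimate.
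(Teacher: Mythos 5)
Your proposal follows essentially the same route as the paper: a Meyers--Serrin partition-of-unity construction with variable mollification scales, the product rule $D\bigl((f\eta_j)*\phi_{\e_j}\bigr)=(\eta_j Df)*\phi_{\e_j}+(f\nabla\eta_j)*\phi_{\e_j}$, the pointwise mollification bound $\eta_\e*w\le[w]_{A_1}w$ coming from the (everywhere) $A_1^*$ condition, Fubini to transfer the mollifier onto $w$, and lower semicontinuity of $\|Df\|_w$ for the matching lower bound, with $w$-approximability supplying the $\Df$-a.e.\ Lebesgue-point convergence $\eta_\e*w\to w$ needed to close case (i). The one step you leave implicit --- passing the $\e\to0$ limit inside the sum $\sum_j\int(\eta_{\e_j}*w-w)\eta_j\,d\|Df\|$ --- is exactly where the paper invokes dominated convergence, using the uniform bound $|\eta_\e*w-w|\le([w]_{A_1}+1)w$ together with the bounded overlap of the cover to produce an integrable majorant; once that is spelled out your argument coincides with the paper's.
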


A key application of smooth function approximation is to generalize results about Sobolev functions to BV functions. To that end, we prove a Gagliardo-Nirenberg-Sobolev inequality for $BV(\R^n;w)$ functions.

\begin{theorem}[Gagliardo-Nirenberg-Sobolev Inequality for $BV(\R^n;w)$]
\label{thm:GNSforweightedBV}
    Let $w\in A_1^*$. Then, for all $f\in BV(\R^n;w)$,
    \begin{equation}\label{ineq:weightedBVGNS}
        \lVert f\rVert_{L^{1^*}(\R^n;w)}\leq C_1[w]_{A_1}^{2/1^*}\lVert Df\rVert_{w^{1/1^*}}(\R^n),
    \end{equation}
    where $C_1$ is the constant from Theorem \ref{thm:GNSforweightedsobolev}. If, in addition, $f$ is $w^{1/1^*}$-approximable, then
    \[
        \lVert f\rVert_{L^{1^*}(\R^n;w)}\leq C_1[w]_{A_1}^{1/1^*}\lVert Df\rVert_{w^{1/1^*}}(\R^n).
    \]
\end{theorem}

\begin{remark}
    Note that because we use smooth approximation in the proof, the constant improves when $f$ is $w^{1/1^*}$-approximable. We also remark that by Lemma \ref{lem:deltaapproximable}, the condition that $f$ is $w^{1/1^*}$-approximable holds in particular when $f$ is $w$-approximable.
\end{remark}

{\begin{remark}\label{Baldifalse} Let us compare our result, Theorem \ref{thm:GNSforweightedBV}, with \cite[Theorem 4.2(ii)]{B}, which establishes a one-weight Gagliardo--Nirenberg--Sobolev inequality of the form
\begin{equation}\label{BaldiGNS}
\lVert f\rVert_{L^{q}(\R^n;w)}\leq C_w\lVert Df\rVert_{w}(\R^n)
\end{equation}
provided that $w\in A_1^*$, satisfies the local growth condition
\begin{equation}\label{eqn:balance}
\frac{w(B(x,r))}{w(B(x,s))}\leq C\Big(\frac{r}{s}\Big)^{q'},
\end{equation}
for $0<s\leq r$, and the global growth condition
\begin{equation}\label{eqn:growth}
\limsup_{R\rightarrow \infty} \frac{R}{w(B(0,R))^{\frac1{q'}}}<\infty.
\end{equation}

From our perspective, this result is artificial for several reasons, and this artificiality stems from the proof technique, which extends a local Poincar\'e inequality to a global one by considering increasingly large balls. First, the scaling in the weight $w$ in \eqref{BaldiGNS} is inconsistent with that of our inequality, even in the case $q=n'$. As a consequence, the dependence of the constant $C_w$ on the weight is obscured and cannot be clearly quantified. In particular, $C_w$ cannot depend only on $[w]_{A_1}$ (see \eqref{eqn:A1constant}), as in Theorem \ref{thm:GNSforweightedBV}, since this quantity is invariant under multiplication by positive constants; that is,
$$
[\lambda w]_{A_1}=[w]_{A_1}, \qquad \lambda>0.
$$
To see that this dependence cannot hold, suppose that $C_w=\phi([w]_{A_1})$ for some function $\phi$ on $[1,\infty)$. Taking $w\equiv \lambda>0$ in \eqref{BaldiGNS}, so that $[w]_{A_1}=[\lambda]_{A_1}=1$, with $f\in C_c^\infty(\R^n)$, yields
$$
\lambda^{\frac1q}\lVert f\rVert_{L^{q}(\R^n)}=\lVert f\rVert_{L^{q}(\R^n;w)}
\leq
C_w \lVert Df\rVert_w(\R^n)=\phi(1)\lambda\lVert Df\rVert(\R^n)
$$
Letting $\lambda\to 0$ shows that this is impossible when $q>1$.  Therefore $C_w$ must depend on the weight in other ways, that are not readily apparent.

Condition \eqref{eqn:balance} and \eqref{eqn:growth} are also restrictive. Inequality \eqref{eqn:balance} follows from a reverse H\"older property on the weight $w$, although one depending specifically on $q$, and is therefore not generally satisfied by arbitrary $A_1^*$ weights. The global condition \eqref{eqn:growth} is even more ad hoc. For example, consider the power weight
$$
w(x)=|x|^{\delta-n},
\qquad 0<\delta<1.
$$
It is well known that $w\in A_1^*$. However, for $R>0$,
$$
w(B(0,R))
=
\frac{s_{n-1}}{\delta}R^\delta,
$$
where $s_{n-1}$ is the surface measure of the unit sphere in $\R^n$.  Consequently
$$
\frac{R}{w(B(0,R))^{\frac1{q'}}}
=
C_{n,\delta} R^{1-\frac{\delta}{q'}}
\to \infty
\qquad \text{as } R\to\infty
$$
for any $q\geq 1$.  Thus, condition \eqref{eqn:growth} fails for simple power weights in $A_1^*$, so Theorem 4.2(ii) in \cite{B} does not apply to such weights. In contrast, our result, Theorem \ref{thm:GNSforweightedBV}, applies to all weights in $A_1^*$.
\end{remark}}

One key result for unweighted sets of finite perimeter is the isoperimetric inequality (see \cite[Theorem 5.11]{EG}), which bounds a set's ``area'' by its ``perimeter.'' Taking $f=\chi_E$ in the Gagliardo-Nirenberg-Sobolev inequality (Theorem \ref{thm:GNSforweightedBV}), it is trivial to obtain the following weighted analogue to the isoperimetric inequality.

\begin{corollary}[Global Weighted Isoperimetric Inequality]
\label{cor:globalweightedisoperimetric}
    Let $w\in A_1^*$, $E$ be a set of finite $w$-perimeter in $\R^n$. Then,
    \[
        (w(E))^{1/1^*}\leq C_1[w]_{A_1}^{2/1^*}\dE_{w^{1/1^*}}(\R^n).
    \]
    If, in addition, $\chi_E$ is $w^{1/1^*}$-approximable, then
    \[
        (w(E))^{1/1^*}\leq C_1[w]_{A_1}^{1/1^*}\dE_{w^{1/1^*}}(\R^n).
    \]
\end{corollary}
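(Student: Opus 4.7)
The plan is to simply substitute $f = \chi_E$ into Theorem \ref{thm:GNSforweightedBV} and compute the two sides. First I would verify that $\chi_E$ actually lies in $BV(\R^n;w)$ so that the GNS theorem can be applied: if $w(E)$ is infinite the left-hand side of the claimed inequality is $+\infty$, so by contrapositive we may assume $w(E)<\infty$, which together with the hypothesis that $E$ has finite $w$-perimeter gives $\chi_E \in L^1(\R^n;w)$ and $\|D\chi_E\|_w(\R^n) = \|\partial E\|_w(\R^n) < \infty$, hence $\chi_E \in BV(\R^n;w)$. (The case $w(E)=\infty$ can be disposed of separately by a truncation argument: apply the GNS inequality to $\chi_{E\cap B_R}$ and let $R\to\infty$, noting that the perimeter of $E\cap B_R$ is controlled by $\|\partial E\|_{w^{1/1^*}}(\R^n)$ plus a boundary term that can be handled using a co-area style slicing, to conclude $w(E)<\infty$.)

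Assuming $\chi_E \in BV(\R^n;w)$, the two norm computations are immediate. On the left,
\[
\|\chi_E\|_{L^{1^*}(\R^n;w)} = \left(\int_{\R^n} \chi_E \, w\,dx\right)^{1/1^*} = (w(E))^{1/1^*},
\]
since $\chi_E^{1^*} = \chi_E$. On the right, the notation for the perimeter of $E$ with respect to the weight $w^{1/1^*}$ gives
\[
\|D\chi_E\|_{w^{1/1^*}}(\R^n) = \|\partial E\|_{w^{1/1^*}}(\R^n).
\]
Plugging these into the first conclusion of Theorem \ref{thm:GNSforweightedBV} yields
\[
(w(E))^{1/1^*} \leq C_1 [w]_{A_1}^{2/1^*} \|\partial E\|_{w^{1/1^*}}(\R^n).
\]
For the improved constant, apply the second conclusion of Theorem \ref{thm:GNSforweightedBV}, which requires $\chi_E$ to be $w^{1/1^*}$-approximable; this is exactly the additional hypothesis stated in the corollary.

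There is no real obstacle here; the only subtlety is justifying the statement when $w(E)=\infty$, which is handled either by interpreting the inequality in the extended sense or by the truncation argument sketched above. Everything else is a direct substitution, which is why the authors describe this as a trivial consequence of the GNS inequality.
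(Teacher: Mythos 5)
Your proof is essentially the paper's argument: substitute $f=\chi_E$ into Theorem \ref{thm:GNSforweightedBV}, compute $\|\chi_E\|_{L^{1^*}(\R^n;w)}=(w(E))^{1/1^*}$ and identify $\|D\chi_E\|_{w^{1/1^*}}(\R^n)=\dE_{w^{1/1^*}}(\R^n)$, and invoke the improved constant under the $w^{1/1^*}$-approximability hypothesis. The detour on $w(E)=\infty$ is unnecessary: by the paper's definition, ``$E$ has finite $w$-perimeter in $\R^n$'' means $\chi_E\in BV(\R^n;w)$, which already entails $\chi_E\in L^1(\R^n;w)$, i.e., $w(E)<\infty$, so there is no separate case to handle and the sketched (and itself under-justified) truncation argument is moot.
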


One thing we would like is to be able to systematically associate functions in $BV(\Omega;w)$ with functions in some unweighted BV space. A similar result is already known for $W^{1,1}(\Omega;w)$ (see Remark \ref{rmk:AntociRemark}). To that end, we formulate the following theorem, which states that $BV(\Omega;w)$ can be isometrically embedded into an unweighted BV space in one higher dimension.

\begin{theorem}[Isometrically Embedding $BV(\Omega;w)\hookrightarrow BV(\Omega_w)$]
\label{thm:isometry}
    Let $w:\mathbb{R}^n\to(0,\infty]$ be lower semicontinuous and let $\Omega\subseteq\mathbb{R}^n$ be open. Then, $J:BV(\Omega;w)\to BV(\Omega_w)$ is an isometric embedding (see Definition \ref{def:J}). That is, for all $f\in BV(\Omega;w)$,
    \[
        \lVert f\rVert_{L^1(\Omega;w)}=\lVert Jf\rVert _{L^1(\Omega_w)}\qquad\textrm{and}\qquad\Dfw(\Omega)=\lVert D(Jf)\rVert(\Omega_w),
    \]
    and it is clear by the definition that $J$ is injective. 
\end{theorem}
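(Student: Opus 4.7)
\emph{Setup and the role of LSC.} From the $L^1$ identity and the context (cf.\ the remark about Antoci), the construction should be $\Omega_w = \{(x,t)\in\Omega\times\R : 0 < t < w(x)\}$ and $(Jf)(x,t) = f(x)$. The lower semicontinuity hypothesis enters at the very start: $\Omega_w$ is open in $\R^{n+1}$ precisely because the strict subgraph $\{t < w(x)\}$ is open, which is equivalent to $w$ being LSC. Only then is $BV(\Omega_w)$ the classical unweighted space. Injectivity of $J$ is immediate from $w>0$, and the $L^1$ identity is Fubini:
\[
    \int_{\Omega_w} |f(x)|\,dx\,dt = \int_\Omega |f(x)| \int_0^{w(x)}\!dt\,dx = \int_\Omega |f|\,w\,dx.
\]

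\emph{The $\leq$ direction.} Given a test field $\Phi = (\Phi',\Phi_{n+1})\in\Lipc(\Omega_w;\R^{n+1})$ with $|\Phi|\leq 1$, compact support inside $\Omega_w$ kills the $\partial_t\Phi_{n+1}$ contribution to $\int_{\Omega_w} Jf\,\mathrm{div}\Phi$: the inner $t$-integral telescopes to zero by the fundamental theorem of calculus. The $\mathrm{div}_x\Phi'$ part, by Fubini and a Leibniz-type exchange (whose boundary term vanishes again since $\Phi$ is compactly supported away from the graph $t = w(x)$), equals $\int_\Omega f\,\mathrm{div}\varphi\,dx$, where $\varphi(x) := \int\Phi'(x,t)\,dt$ satisfies $\varphi\in\Lipc(\Omega;\R^n)$ with $|\varphi(x)|\leq \int_0^{w(x)}|\Phi'(x,t)|\,dt\leq w(x)$. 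The structure Theorem~\ref{thm:structuretheorem} (giving $d\Dfw = w\,d\Df$) then bounds this by $\Dfw(\Omega)$. Taking sup yields $\lVert D(Jf)\rVert(\Omega_w)\leq \Dfw(\Omega)$.

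\emph{The $\geq$ direction, the main obstacle.} For the reverse inequality, given $\varphi\in\Lipc(\Omega;\R^n)$ admissible for the weighted norm ($|\varphi|\leq w$) I would lift to $\Phi(x,t) := (\varphi(x)\eta(x,t),0)$ with $\eta\geq 0$, $\int\eta(x,t)\,dt = 1$, and $\eta\leq 1/w(x)$, so that $|\Phi|\leq 1$ and the projection in the previous paragraph recovers $\int f\,\mathrm{div}\varphi$. The ideal choice $\eta = \chi_{(0,w(x))}/w(x)$ meets both constraints exactly but is only upper semicontinuous (since $1/w$ is USC when $w$ is LSC); regularizing it into a Lipschitz function compactly supported in $\Omega_w$ while preserving both constraints is the main technical obstacle. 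I expect to handle it by approximating $w$ from below by Lipschitz $w_k\uparrow w$ (available for LSC $w>0$ via inf-convolution $w_k(x) = \inf_y\{w(y)+k|x-y|\}$), replacing $\varphi$ by $\varphi_k := (w_k/w)\varphi$ (which is admissible for weight $w_k$), and for each $k$ choosing an explicit trapezoidal $\eta_k$ in $t$ with width depending Lipschitzly on $w_k(x)$, which is possible because $1/w_k$ is locally Lipschitz on the compact set $\mathrm{supp}\,\varphi$ where $w_k\geq \min_K w > 0$. Dominated convergence, using the domination $|\varphi_k|\leq w$ together with the finiteness $\int w\,d\Df = \Dfw(\Omega) < \infty$, then passes to the limit in $\int f\,\mathrm{div}(\varphi_k\alpha_k)$ (with $\alpha_k = \int\eta_k\,dt\to 1$) and yields $\int f\,\mathrm{div}\varphi\leq \lVert D(Jf)\rVert(\Omega_w)$.
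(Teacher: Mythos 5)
Your approach is genuinely different from the paper's. The paper does not argue by duality at all: it first proves the identity for characteristic functions via a careful measure-theoretic analysis (Lemma~\ref{lem:b3}: identifying $(\partial_*E_w)\cap\Omega_w$ as the sub-cylinder over $(\partial_*E)\cap\Omega$, using rectifiability of $\partial_*E$ and the product formula for Hausdorff measure from \cite{Fed69}, and invoking Federer's characterization from \cite{La20}), and then reduces a general $f\in BV(\Omega;w)$ to its superlevel sets through the weighted coarea formula \cite[Theorem~3.1.13]{C} together with the classical one \cite[Theorem~5.9]{EG}. Your $\leq$ direction --- projecting a test field $\Phi$ on $\Omega_w$ down to $\varphi(x)=\int\Phi'(x,t)\,dt$, checking that the $\partial_t$ piece integrates to zero by compact support, and observing $|\varphi|\leq w$ --- is correct and cleanly done, and this inequality is not even separated out in the paper since the coarea argument gives equality in one shot.

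The $\geq$ direction, however, has a genuine gap as written. Your corrected test field is $\Phi_k(x,t)=(\varphi_k(x)\eta_k(x,t),0)$ with $\varphi_k=(w_k/w)\varphi$, but $1/w$ is only upper semicontinuous when $w$ is LSC, so $\varphi_k$ is not Lipschitz and $\Phi_k$ is not an admissible competitor for $\lVert D(Jf)\rVert(\Omega_w)$. The Leibniz exchange and the representation $\int_\Omega f\,\div\psi\,dx=-\int_\Omega\psi\cdot\nu\,d\Df$ that your dominated-convergence step implicitly relies on both require the test field to be Lipschitz; neither holds for $\varphi_k\alpha_k$ as constructed. A secondary error: you assert $w_k\geq\min_K w>0$, but the inf-convolution satisfies $w_k\leq w$, so one only has $\min_K w_k>0$ for each fixed $k$ (by continuity and positivity of $w_k$), not a $k$-independent lower bound by $\min_K w$. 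The obstruction you identified is real --- with the constraint $\eta\leq 1/w$ and support in $\{0<t<w(x)\}$, the only profile with $\int\eta\,dt=1$ is the non-Lipschitz $\chi_{(0,w(x))}/w(x)$ --- but the proposed fix reintroduces the non-Lipschitz factor rather than removing it. A repair that stays within your framework would keep $\varphi$ fixed, weaken the pointwise constraint to $\eta_k\leq 1/|\varphi|$ (which is all that $|\Phi_k|\leq 1$ requires, and $|\varphi|$ is Lipschitz), allow $\alpha_k(x)=\int\eta_k(x,\cdot)\,dt<1$, and prove $\alpha_k\to 1$ $\Df$-a.e.\ with $\alpha_k\leq 1$ so that dominated convergence (dominated by $|\varphi|\leq w\in L^1(\Omega;d\Df)$ from Theorem~\ref{thm:varwmeasure}) applies to $-\int\alpha_k\varphi\cdot\nu\,d\Df$; even then one should first scale $\varphi$ by $(1-\e)$ to make $|\varphi|<w$ strictly, which creates the needed room in $t$. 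The paper's route via Lemma~\ref{lem:b3} and coarea sidesteps this construction entirely, at the cost of importing rectifiability theory and Camfield's weighted coarea formula.
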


Finally, we remark that a weighted analogue of the coarea formula for BV functions has already been proven for very general weights by Camfield \cite[Theorem 3.1.13]{C}, so we will not prove such a result here. In fact, we cite Camfield's result in Section \ref{sec:embedding} (see Theorem \ref{thm:Coarea}).

\subsection{Outline of the Paper}

\begin{itemize}
    \item In Section \ref{sec:preliminaries}, we define classical and weighted BV spaces along with $A_1$ weights.
    \item In Section \ref{sec:structuretheorem}, we prove Theorem \ref{thm:structuretheorem}. Before doing so, we also characterize weighted BV functions.
    \item In Section \ref{sec:finiteperimeter}, we explore sets of finite $w$-perimeter. We prove that $W^{1,1}(\Omega,w)\subsetneq BV(\Omega;w)$ and $W^{1,1}_\textrm{loc}(\Omega,w)\subsetneq BV_\textrm{loc}(\Omega;w)$. Moreover, we consider several examples of that show that sets of finite perimeter do not necessarily have finite $w$-perimeter, and vice versa.
    \item In Section \ref{sec:smoothapproximation}, we prove Theorem \ref{thm:EG5.3analogue}. We also consider the optimality of the $w$-approximability condition (see Definition \ref{def:approximable}) in obtaining Theorem \ref{thm:EG5.3analogue}(i).
    \item In Section \ref{sec:isoperimetricinequality}, we prove Theorem \ref{thm:GNSforweightedBV}.
    \item In Section \ref{sec:embedding}, we prove Theorem \ref{thm:isometry}.
    \item In Appendix \ref{sec:maximalfunctionappendix}, we characterize the measures that satisfy the hypotheses of Theorem \ref{thm:GNSforweightedsobolev}.
\end{itemize}

%%%%%%%%%%%%%%%%%%%%%%%%%%%%%%%%%%%%%%%%%%%%%%%%
%%%%%%%%%%%%%%%%%%%%%%%%%%%%%%%%%%%%%%%%%%%%%%%%
%%%%%%%%%%%%%%%%%%%%%%%%%%%%%%%%%%%%%%%%%%%%%%%%
%%%%%%%%%%%%%%%%%%%%%%%%%%%%%%%%%%%%%%%%%%%%%%%%
%%%%%%%%%%%%%%%%%%%%%%%%%%%%%%%%%%%%%%%%%%%%%%%%
%%%%%%%%%%%%%%%%%%%%%%%%%%%%%%%%%%%%%%%%%%%%%%%%

%\newpage

\section{Preliminaries}
\label{sec:preliminaries}

\subsection{Notation}

We will use the following notation:
\begin{itemize}
    \item Throughout the paper, we let $n\in\N$, and we use $\Omega$ to denote an open subset of $\R^n$.
    \item We use the letters $c$, $C$ to denote harmless positive constants, not necessarily the same at each occurrence, which depend only on dimension and the constants appearing in the hypotheses of the theorems (which we refer to as the ``allowable parameters''). We shall also sometimes write $a\lesssim b$ and $a\approx b$ to mean, respectively, that $a\leq Cb$ and $0<c\leq a/b\leq C$, where the constants $c$ and $C$ are as above, unless explicitly noted to the contrary.
\end{itemize}

\subsection{Classical $BV$ Spaces}
Following \cite{EG}, we recall the definitions of functions of bounded variation and sets of finite perimeter.

\begin{definition}[{\cite[Definitions 5.1 and 5.2]{EG}}]
\,
    \begin{enumerate}[(i)]
        \item Let $f\in L^1(\Omega)$. Then, we say that $f$ has \textbf{bounded variation} in $\Omega$ if
        \[
            \sup\left\{\int_\Omega f\div\varphi\,d x:\varphi\in\Lipc(\Omega;\R^n),|\varphi|\leq1\right\}<\infty.
        \]
        We denote the space of such functions by $BV(\Omega)$.
        \item Let $f\in L^1_\text{loc}(\Omega)$. Then, we say that $f$ has \textbf{locally bounded variation} in $\Omega$ if
        \[
            \sup\left\{\int_V f\div\varphi\,d x:\varphi\in\Lipc(V;\R^n),|\varphi|\leq1\right\}<\infty
        \]
        for all $V\Subset\Omega$. We denote the space of such functions by $\BVloc(\Omega)$.
        \item We say that a set $E$ has \textbf{finite perimeter} (resp. \textbf{locally finite perimeter}) in $\Omega$ if $\chi_E\in BV(\Omega)$ (resp. $\chi_E\in\BVloc(\Omega)$).
    \end{enumerate}
\end{definition}

We remark that we will identify functions of bounded variation that agree a.e. In the definition given in \cite{EG}, the spaces are introduced with respect to the test space $C_c^1(\Omega;\R^n)$ rather than $\mathrm{Lip}_c(\Omega;\R^n)$. This distinction poses no difficulty, however, since the entire framework extends naturally to Lipschitz test functions (see \cite{Fed69}).

Now, we recall the structure theorem for functions of locally bounded variation.

\begin{theorem}[{\cite[Theorem 5.1]{EG}}, Structure Theorem for $BV_\mathrm{loc}(\Omega)$]
\label{thm:EG5.1}
    Let $f\in BV_\mathrm{loc}(\Omega)$. Then, there exist a Radon measure $\mu$ on $\Omega$ and a $\mu$-measurable function $\nu:\Omega\to\R^n$ such that
    \begin{enumerate}[(i)]
        \item $|\nu(x)|=1$ $\mu$-a.e., and
        \item for all $\varphi\in\Lipc(\Omega;\R^n)$, we have
        \[
            \int_\Omega f\div\varphi\,dx=-\int_\Omega\varphi\cdot\nu\,d\mu.
        \]
    \end{enumerate}
\end{theorem}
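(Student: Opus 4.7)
The plan is to construct the pair $(\mu,\nu)$ via a vector-valued Riesz representation applied to the linear functional
\[
    L(\varphi) := -\int_\Omega f\div\varphi\,dx, \qquad \varphi\in\Lipc(\Omega;\R^n).
\]
First I would verify that $L$ is locally bounded in the supremum norm. Given a compact $K\subset\Omega$, choose an open $V$ with $K\subset V\Subset\Omega$; for any $\varphi\in\Lipc(\Omega;\R^n)$ supported in $K$ and not identically zero, the normalized field $\psi:=\varphi/\|\varphi\|_\infty$ lies in $\Lipc(V;\R^n)$ with $|\psi|\leq 1$, so the defining supremum for $\BVloc(\Omega)$ gives
\[
    |L(\varphi)| = \|\varphi\|_\infty\,\bigg|\int_V f\div\psi\,dx\bigg| \leq C_V\,\|\varphi\|_\infty.
\]

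Next, I would extend $L$ by density (mollification in the sup norm on compacta) to $C_c(\Omega;\R^n)$ and read off, component by component, the scalar Riesz representation theorem: for each $i=1,\dots,n$, the functional $\varphi\mapsto L(\varphi e_i)$ on $C_c(\Omega)$ is represented by a signed Radon measure $\mu_i$. This assembles into a vector-valued Radon measure $\vec\mu=(\mu_1,\dots,\mu_n)$. I would then set $\mu$ equal to the total variation measure $\|\vec\mu\|$ and invoke the polar decomposition (Radon–Nikodym for vector measures) to obtain $\nu\in L^\infty(\mu;\R^n)$ with $d\vec\mu=\nu\,d\mu$ and $|\nu|=1$ $\mu$-a.e., which is exactly (i).

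Stitching the pieces together, for any $\varphi\in\Lipc(\Omega;\R^n)$ I get
\[
    -\int_\Omega f\div\varphi\,dx \;=\; L(\varphi) \;=\; \sum_{i=1}^n \int_\Omega \varphi_i\,d\mu_i \;=\; \int_\Omega \varphi\cdot\nu\,d\mu,
\]
which is (ii). The main obstacle is the polar decomposition step, since the natural object produced by the scalar Riesz theorem gives only $|\nu|\leq 1$; upgrading this to $|\nu|=1$ $\mu$-a.e.\ requires choosing $\mu$ to be the total variation rather than a generic dominating measure. This is standard but nontrivial, and is the content of the vector-valued Riesz representation theorem in Evans–Gariepy. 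All remaining work consists of bookkeeping to move between the test classes $\Lipc$ and $C_c^1$ (respectively $C_c$) via convolution, which the paper has already noted causes no difficulty.
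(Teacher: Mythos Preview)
The paper does not give its own proof of this statement: Theorem~\ref{thm:EG5.1} is simply recalled from \cite[Theorem~5.1]{EG} as background, with no argument supplied. Your proposal is correct and is essentially the standard proof found in Evans--Gariepy, namely the vector-valued Riesz representation theorem applied to the locally bounded functional $L(\varphi)=-\int_\Omega f\div\varphi\,dx$, followed by polar decomposition to obtain $|\nu|=1$ $\mu$-a.e.
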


We recall the notation from \cite{EG}. Namely, we write
\[
    \Df:=\mu,\qquad\text{and}\qquad[Df]:=\Df\resmes\nu,
\]
where $\mu$ and $\nu$ are as in Theorem \ref{thm:EG5.1}. In particular, if $f=\chi_E$, then we write
\[
    \dE:=\mu,\qquad\text{and}\qquad\nu_E:=-\nu.
\]
And if $f\in W^{1,1}(\Omega)$, then
\[
    \Df=\mathcal{L}^n\resmes|Df|,
\]
where $\mathcal{L}^n$ is the $n$-dimensional Lebesgue measure, and $Df$ is the weak gradient of $f$.

Finally, note that for each open set $V\Subset\Omega$,
\[
    \Df(V)=\sup\left\{\int_V f\div\varphi\,dx:\varphi\in\Lipc(V;\R^n),|\varphi|\leq1\right\},
\]
and
\[
    \dE(V)=\sup\left\{\int_E \div\varphi\,dx:\varphi\in\Lipc(V;\R^n),|\varphi|\leq1\right\}.
\]

\subsection{Weighted $BV$ Spaces}

Following \cite{B}, we define functions of bounded weighted variation and sets of finite weighted perimeter.

\begin{definition}
{Let $w:\R^n\to(0,\infty]$.}%\joseph{\Rd What is correct here: $[0,\infty]$ or $(0,\infty]$? For our results requiring $w\in A_1^*$, we have $w>0$ since the only everywhere $A_1$ weight that is not strictly positive is $w\equiv0$. However, for our results only needing lower semicontinuity, can we allow the weight to be $0$?}
    \begin{enumerate}[(i)]
        \item Let $f\in L^1(\Omega;w)$. Then, we say that $f$ has \textbf{bounded $w$-variation} if
        \[
            \Dfw(\Omega):=\sup\left\{\int_\Omega f\div\varphi\,dx:\varphi\in\Lipc(\Omega;\R^n),|\varphi|\leq w\right\}<\infty.
        \]
        We denote the space of such functions by $BV(\Omega;w)$.
        \item Let $f\in L^1_\textrm{loc}(\Omega;w)$. Then, we say that $f$ has \textbf{locally bounded $w$-variation} if
        \[
            \Dfw(V):=\sup\left\{\int_V f\div\varphi\,dx:\varphi\in\Lipc(V;\R^n),|\varphi|\leq w\right\}<\infty
        \]
        for all $V\Subset\Omega$. We denote the space of such functions by $\BVloc(\Omega;w)$.
        \item We say that a set $E$ has \textbf{finite $w$-perimeter} (resp. \textbf{locally finite $w$-perimeter}) in $\Omega$ if $\chi_E\in BV(\Omega;w)$ (resp. $\chi_E\in\BVloc(\Omega;w)$).
    \end{enumerate}
\end{definition}
As in the unweighted case, we will identify functions of bounded variation that agree a.e. 

Now, we record the following fact relating weighted and unweighted $BV$ spaces.

\begin{lemma}[Relationship between Weighted and Unweighted $BV$ Spaces]
\label{lem:BVwcontainment}
Let $w:\R^n\to(0,\infty]$ be lower semicontinuous.
\begin{enumerate}[(i)]
    \item $BV(\Omega;w)\subseteq BV_\mathrm{loc}(\Omega;w)\subseteq BV_\mathrm{loc}(\Omega)$.
    \item If $w\geq c>0$ on $\Omega$, then $BV(\Omega;w)\subseteq BV(\Omega)$.
\end{enumerate}
\end{lemma}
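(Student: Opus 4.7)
The plan is to reduce everything to one elementary observation about lower semicontinuous weights: since $w:\R^n\to(0,\infty]$ is lower semicontinuous and strictly positive, on any compact set $K\subset\Omega$ the infimum $\inf_K w$ is attained (a lower semicontinuous function attains its infimum on a nonempty compact set), and positivity of $w$ forces this infimum to be positive. Hence $w\geq c_K>0$ on $K$ for some constant $c_K$ depending on $K$. This is the crucial property that lets us compare weighted and unweighted quantities.

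For the first inclusion in (i), $BV(\Omega;w)\subseteq\BVloc(\Omega;w)$, I would argue directly from the definitions: the inclusion $L^1(\Omega;w)\subseteq L^1_{\mathrm{loc}}(\Omega;w)$ is immediate, and for any $V\Subset\Omega$, a test function $\varphi\in\Lipc(V;\R^n)$ with $|\varphi|\leq w$ extends by zero to a valid test function on $\Omega$ with the same pointwise bound, so $\Dfw(V)\leq\Dfw(\Omega)<\infty$.

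For the second inclusion in (i), $\BVloc(\Omega;w)\subseteq\BVloc(\Omega)$, fix $V\Subset\Omega$ and apply the observation to $K=\overline{V}$ to get $w\geq c_V>0$ on $\overline{V}$. Local $w$-integrability then yields local Lebesgue integrability via $\int_V|f|\,dx\leq c_V^{-1}\int_V|f|w\,dx<\infty$. To control the unweighted variation, I would rescale test functions: given $\varphi\in\Lipc(V;\R^n)$ with $|\varphi|\leq1$, the rescaled function $c_V\varphi$ lies in $\Lipc(V;\R^n)$ and satisfies $|c_V\varphi|\leq c_V\leq w$ on $V$ (inside $\supp\varphi\subset\overline{V}$ by the lower bound, outside by vanishing). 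Therefore
\[
\int_V f\div\varphi\,dx=c_V^{-1}\int_V f\div(c_V\varphi)\,dx\leq c_V^{-1}\,\Dfw(V),
\]
and taking the supremum over such $\varphi$ gives $\Df(V)\leq c_V^{-1}\Dfw(V)<\infty$, so $f\in\BVloc(\Omega)$.

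Claim (ii) is proved by exactly the same two estimates, performed globally: the hypothesis $w\geq c>0$ on $\Omega$ supplies a uniform constant, giving $\int_\Omega|f|\,dx\leq c^{-1}\int_\Omega|f|w\,dx<\infty$ and, by rescaling any $\varphi\in\Lipc(\Omega;\R^n)$ with $|\varphi|\leq1$ to $c\varphi$, the bound $\Df(\Omega)\leq c^{-1}\Dfw(\Omega)<\infty$. There is no real obstacle; the only subtleties are the test-function rescaling and the essential use of lower semicontinuity to ensure a strictly positive lower bound on compacta (without it, $w$ could approach zero on a compact set and the argument would collapse).
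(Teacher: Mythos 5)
Your proof is correct and follows the same route as the paper's: both rely on the observation that a positive lower semicontinuous weight is bounded below by a positive constant on compact subsets of $\Omega$, and both then rescale a test function $\varphi$ with $|\varphi|\leq 1$ to $c_V\varphi$ (equivalently, enlarge $|\varphi|\leq 1$ to $|\varphi|\leq w/\inf_V w$) to reduce the unweighted variation to the weighted one. Your version is slightly more explicit in two places the paper leaves implicit: you justify positivity of $\inf_{\overline V} w$ by the lower-semicontinuity extreme value theorem, and you separately verify that $f\in L^1_{\mathrm{loc}}(\Omega;w)$ implies $f\in L^1_{\mathrm{loc}}(\Omega)$, which is needed for membership in $\BVloc(\Omega)$ beyond finiteness of the variation.
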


\begin{remark}
    The assumption that $w\geq c>0$ in Lemma \ref{lem:BVwcontainment}(ii) holds trivially if $\Omega$ is bounded.
\end{remark}

\begin{proof}
    The first containment of (i) is trivial. Then, for all open $V\Subset\Omega$,
    \begin{align*}
        &\sup\left\{\int f\div\varphi\,dx:\varphi\in\Lipc(V;\R^n),|\varphi|\leq 1\right\}\\
        &\qquad\leq\sup\left\{\int f\div\varphi\,dx:\varphi\in\Lipc(V;\R^n),|\varphi|\leq\frac{w}{\inf_V w}\right\}\\
        &\qquad\leq\frac{1}{\inf_V w}\sup\left\{\int f\div\varphi\,dx:\varphi\in\Lipc(V;\R^n),|\varphi|\leq w\right\}\\
        &\qquad\leq\frac{\Dfw(V)}{\inf_V w}\\
        &\qquad<\infty,
    \end{align*}
    where we used that $w$ is bounded away from 0 on the bounded set $V$ and $f\in BV_\textrm{loc}(\Omega;w)$. This gives the second containment of (i).

    (ii) holds by repeating the argument above, replacing $V$ with $\Omega$.
\end{proof}

\subsection{$A_1$ Weights}\label{A1weights}

We will now define the class of weights that will be of particular interest to us.

\begin{definition}
    Let $w:\R^n\to[0,\infty]$. We say that $w$ is an \textbf{$A_1$ weight} if $w\in L^1_\text{loc}(\Omega)$, and there exists some $C>0$ such that
    \begin{equation}
    \label{eqn:A1condition}
        \fint_B w\,dx\leq C\essinf_{x\in B}w(x)
    \end{equation}
    for all balls $B\subseteq\R^n$. In this case, we write $w\in A_1$. We call the smallest $C$ for which (\ref{eqn:A1condition}) holds the \textbf{$A_1$ constant} and write
    \begin{equation}
    \label{eqn:A1constant}
        [w]_{A_1}:=\inf\{C:\text{(\ref{eqn:A1condition}) holds}\}.
    \end{equation}
    If, in addition, $w$ is lower semicontinuous, we say that $w$ is an \textbf{$A_1^*$ weight} and write $w\in A_1^*$.
\end{definition}

In particular, note that condition (\ref{eqn:A1condition}) immediately implies that
\[
    Mw(x)\leq[w]_{A_1}w(x)\qquad\textrm{for all }w\in A_1,\textrm{ and a.e. }x\in\R^n,
\]
where $M$ is the Hardy-Littlewood maximal function taken over uncentered balls. This fact will become quite important in several proofs of ours. However, because functions can have positive variation on sets of measure zero, it is not enough to have this inequality a.e. Thus, we define the following slightly stronger subclass of $A_1$ weights.

\begin{definition}
    Let $w:\R^n\to[0,\infty]$. We say that $w$ is an \textbf{everywhere $A_1$ weight} if $w\in L^1_\text{loc}(\Omega)$, and there exists some $C>0$ such that
    \begin{equation}
    \label{eqn:EA1condition}
        \fint_B w\,dx\leq C\inf_{x\in B}w(x)
    \end{equation}
    for all balls $B\subseteq\R^n$. In this case, we write $w\in A_1$. We call the smallest $C$ for which (\ref{eqn:EA1condition}) holds the \textbf{$A_1$ constant} and write
    \[
        [w]_{A_1}:=\inf\{C:\text{(\ref{eqn:EA1condition}) holds}\}.
    \]
    If, in addition, $w$ is lower semicontinuous, we say that $w$ is an \textbf{everywhere $A_1^*$ weight} and write $w\in A_1^*$.
\end{definition}

\begin{remark}
    By abuse of notation, we will denote the collections of everywhere $A_1$ weights and everywhere $A_1^*$ weights as $A_1$ and $A_1^*$, respectively. Thus, in the sequel, we mean by $w\in A_1$ or $w\in A_1^*$ that $w$ is an everywhere $A_1$ weight or an everywhere $A_1^*$ weight, respectively.
\end{remark}

Because the essential infimum is replaced by an infimum in condition (\ref{eqn:EA1condition}), we get that
\begin{equation}
\label{eqn:maxmlfxnbound}
    Mw(x)\leq[w]_{A_1}w(x)\qquad\textrm{for all }w\in A_1,x\in\R^n.
\end{equation}
Note also that $w\in A_1$ implies that $w\equiv 0$ or $w>0$ everywhere. We will exclude the trivial case that $w\equiv0$ and assume that $w\in A_1$ implies that $w$ is positive. The following estimate will be of particular use to us.  The classical proof can be found in \cite[Theorem 2.1.10]{Graf}.

\begin{lemma}
\label{lem:mollificationbound}
    Let $w\in A_1^*$, $\eta \in C_c^\infty(\R^n)$ be a positive radially decreasing function with $\int_{\mathbb R^n}\eta \,dx=1$. Then, for any $\e>0$
    \[
        \eta_\e*w(x)\leq [w]_{A_1}w(x).
    \]
 %   \joseph{I still don't know how to get a better estimate than this. I know that Kabe keeps writing $\eta_e*w\leq[w]_{A_1}w$, but I can't seem to derive that.}
\end{lemma}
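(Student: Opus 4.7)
The plan is to use the layer-cake representation of the radially decreasing mollifier to reduce the pointwise estimate to the defining everywhere $A_1$ inequality, integrated against balls centered at the evaluation point $x$.

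First, I would observe that since $\eta$ is radially decreasing with $\eta(y) = \phi(|y|)$ for some non-increasing $\phi \geq 0$, each superlevel set $\{\eta > t\}$ is an open ball $B(0, r(t))$ centered at the origin (possibly empty for large $t$). Hence by the layer-cake formula,
\[
\eta(y) = \int_0^\infty \chi_{\{\eta > t\}}(y)\,dt = \int_0^\infty \chi_{B(0, r(t))}(y)\,dt.
\]
Scaling by $\varepsilon$ gives $\eta_\varepsilon(y) = \int_0^\infty \chi_{B(0, \varepsilon r(t))}(y)\,dt / \varepsilon^n$, or equivalently, after a change of variables in $t$, a representation of $\eta_\varepsilon$ as an integral of characteristic functions of balls $B(0, s)$ against a nonnegative measure $d\mu_\varepsilon(s)$. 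The precise bookkeeping is not important; what matters is that $\eta_\varepsilon(y) = \int_0^\infty \chi_{B(0, s)}(y)\,d\mu_\varepsilon(s)$ with $\int_0^\infty |B(0,s)|\,d\mu_\varepsilon(s) = \int_{\R^n} \eta_\varepsilon = 1$.

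Second, I would apply Tonelli's theorem to rewrite the convolution as an average of ball averages of $w$ centered at $x$:
\[
\eta_\varepsilon * w(x) = \int_{\R^n} \eta_\varepsilon(x - y) w(y)\,dy = \int_0^\infty \int_{B(x, s)} w(y)\,dy\, d\mu_\varepsilon(s) = \int_0^\infty |B(x,s)|\fint_{B(x,s)} w\,d\mu_\varepsilon(s).
\]
Now I invoke the everywhere $A_1$ condition \eqref{eqn:EA1condition} for each ball $B(x,s)$, which gives $\fint_{B(x,s)} w \leq [w]_{A_1} \inf_{B(x,s)} w$. Since $x \in B(x,s)$, we have $\inf_{B(x,s)} w \leq w(x)$, and combining,
\[
\eta_\varepsilon * w(x) \leq [w]_{A_1} w(x) \int_0^\infty |B(x,s)|\,d\mu_\varepsilon(s) = [w]_{A_1} w(x),
\]
using the normalization from the previous step.

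The main obstacle, if any, is purely bookkeeping: writing down the layer-cake representation of $\eta_\varepsilon$ carefully enough that Tonelli applies and the total mass really is $\int \eta_\varepsilon = 1$. Crucially, the argument uses the \emph{everywhere} $A_1$ infimum (not essential infimum) in a pointwise way, which is exactly why the $A_1^*$ hypothesis—rather than ordinary $A_1$—is needed here: the conclusion must hold at every $x$, not merely almost everywhere. Lower semicontinuity plays no explicit role in the computation itself but is built into the definition of $A_1^*$ and guarantees that $w(x)$ is the correct pointwise representative.
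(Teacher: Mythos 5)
Your proof is correct and is essentially the paper's argument: the paper simply writes $\eta_\e*w(x)\le Mw(x)\le [w]_{A_1}w(x)$, citing the classical fact (Grafakos, Theorem 2.1.10) that a positive radially decreasing kernel of integral one is dominated by the maximal function, and then applying the everywhere-$A_1$ pointwise bound \eqref{eqn:maxmlfxnbound}. Your layer-cake computation is just the standard proof of that cited domination, with the two steps fused by applying the $A_1$ condition ball by ball instead of passing through $Mw$.
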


\begin{proof} Since $\eta$ is a positive radially decreasing function with integral one, we have
$$\eta_\e*w(x)\leq Mw(x)\leq [w]_{A_1}w(x).$$
  %  Using (\ref{eqn:maxmlfxnbound}),
  %  \begin{align*}
  %      \eta_\e*w(x)&=\int_{B(x,\e)}\frac{1}{\e^n}\eta\left(\frac{x-y}{\e}\right)w(y)\,dy\\
  %      &\leq\frac{\lVert\eta\rVert_\infty|B(0,1)|}{\e^n|B(0,1)|}\int_{B(x,\e)}w(y)\,dy\\
  %      &\leq\lVert\eta\rVert_\infty|B(0,1)|Mw(x)\\
 %      &\leq\lVert\eta\rVert_\infty|B(0,1)|[w]_{A_1}w(x).
 %   \end{align*}
\end{proof}

%%%%%%%%%%%%%%%%%%%%%%%%%%%%%%%%%%%%%%%%%%%%%%%%
%%%%%%%%%%%%%%%%%%%%%%%%%%%%%%%%%%%%%%%%%%%%%%%%
%%%%%%%%%%%%%%%%%%%%%%%%%%%%%%%%%%%%%%%%%%%%%%%%
%%%%%%%%%%%%%%%%%%%%%%%%%%%%%%%%%%%%%%%%%%%%%%%%
%%%%%%%%%%%%%%%%%%%%%%%%%%%%%%%%%%%%%%%%%%%%%%%%
%%%%%%%%%%%%%%%%%%%%%%%%%%%%%%%%%%%%%%%%%%%%%%%%

%\newpage

\section{A Structure Theorem for $\BVloc(\Omega;w)$}
\label{sec:structuretheorem}

Before proving a structure theorem from $BV(\Omega;w)$, we will prove a theorem regarding the relationship between the weighted and unweighted variation measures similar to \cite[Theorem 4.1]{B}. We remark, however, that Baldi's theorem assumes that the weights under consideration are $A_1^*$ weights, while our result considers weights that are merely positive and lower semicontinuous. As a result, our proof differs significantly from Baldi's.%\joseph{\Rd Maybe also mention Camfield's result here?}

\begin{theorem}[Relationship between Weighted and Unweighted Variation Measure]
\label{thm:varwmeasure}
Let $w:\R^n\to(0,\infty]$ be lower semicontinuous.
\begin{enumerate}[(i)]
    \item $f\in BV(\Omega;w)$ if and only if $f\in BV_\mathrm{loc}(\Omega)$ and $w\in L^1(\Omega;d\Df)$. In this case,
    \begin{equation}
    \label{eqn:varwiswdmu}
        \Dfw(\Omega)=\int_\Omega w\,d\Df.
    \end{equation}
    \item $f\in\BVloc(\Omega;w)$ if and only if $f\in\BVloc(\Omega)$ and $w\in L^1_\mathrm{loc}(\Omega;d\Df)$. In this case,
    \[
        \Dfw(V)=\int_V w\,d\Df
    \]
    for all $V\Subset\Omega$.
    \item Suppose $w\geq c>0$ on $\Omega$. Then, $f\in BV(\Omega;w)$ if and only if $f\in BV(\Omega)$ and $w\in L^1(\Omega;d\Df)$. In this case,
    \[
        \Dfw(\Omega)=\int_\Omega w\,d\Df.
    \]
\end{enumerate}
\end{theorem}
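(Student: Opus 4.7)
The content of the theorem reduces to the measure-theoretic identity
\[
\Dfw(V) = \int_V w\,d\Df
\]
for an arbitrary open $V\subseteq\Omega$, whenever $f\in\BVloc(\Omega)$ and $w$ is lower semicontinuous; once this is in hand, parts (i)--(iii) follow by taking $V=\Omega$ or $V\Subset\Omega$ and combining with Lemma \ref{lem:BVwcontainment}.

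For the easy direction $\Dfw(V)\leq\int_V w\,d\Df$, I would apply the unweighted structure theorem (Theorem \ref{thm:EG5.1}) directly to any admissible $\varphi\in\Lipc(V;\R^n)$ with $|\varphi|\leq w$, extended by zero to $\Omega$. The identity there yields
\[
\int_V f\,\div\varphi\,dx = -\int_V \varphi\cdot\nu\,d\Df \leq \int_V |\varphi|\,d\Df \leq \int_V w\,d\Df,
\]
and the bound follows by taking the supremum.

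The reverse inequality requires a double approximation and is where I expect the main obstacle to lie. Using lower semicontinuity of $w$ together with the fact that $\Df$ is a Radon measure, I have
\[
\int_V w\,d\Df = \sup\left\{\int_V h\,d\Df \;:\; h\in\Lipc(V),\; 0\leq h\leq w\right\},
\]
so it suffices to produce, for each such $h$, an admissible test field $\varphi$ with $-\int\varphi\cdot\nu\,d\Df$ arbitrarily close to $\int_V h\,d\Df$. The natural candidate is $-h\,\nu$, but $\nu$ is only $\Df$-measurable. My plan is to run the standard Lusin--Tietze--mollification chain on the compact $K:=\supp(h)$ (on which $\Df(K)<\infty$) to obtain smooth vector fields $\tilde\nu_\delta$ on $\R^n$ with $|\tilde\nu_\delta|\leq 1$ and $\tilde\nu_\delta\cdot\nu\to 1$ in $L^1(K;d\Df)$ as $\delta\to 0$ and the Lusin parameter shrinks. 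Then $\varphi_\delta:=-h\,\tilde\nu_\delta\in\Lipc(V;\R^n)$ satisfies $|\varphi_\delta|\leq h\leq w$, and by Theorem \ref{thm:EG5.1},
\[
\int_V f\,\div\varphi_\delta\,dx = \int_V h\,(\tilde\nu_\delta\cdot\nu)\,d\Df \longrightarrow \int_V h\,d\Df
\]
by dominated convergence, completing the identity.

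The delicate point is preserving the pointwise bound $|\varphi_\delta|\leq w$ throughout the approximation, which is why the strategy approximates $w$ \emph{from below} by Lipschitz $h$ and truncates the Tietze extension of $\nu$ so that $|\tilde\nu_\delta|\leq 1$. With the identity established, part (i) is immediate from the containment $BV(\Omega;w)\subseteq\BVloc(\Omega)$ of Lemma \ref{lem:BVwcontainment}(i); part (ii) follows by applying the identity to each $V\Subset\Omega$; and part (iii) follows by Lemma \ref{lem:BVwcontainment}(ii), which upgrades the containment to $BV(\Omega;w)\subseteq BV(\Omega)$ under the assumption $w\geq c>0$.
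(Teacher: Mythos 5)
Your proposal is correct and takes essentially the same route as the paper: both sides establish the key identity $\Dfw(V)=\int_V w\,d\Df$ via the unweighted structure theorem, with the easy inequality from $|\nu|=1$ and the reverse inequality from a Lusin--Tietze--mollification approximation of $\nu$ paired with approximation of the lower semicontinuous $w$ from below by compactly supported Lipschitz functions; the paper's two-step exhaustion (monotone Lipschitz $w_k\uparrow w$ on $V$, then $V_m\uparrow\Omega$) is organizationally equivalent to your single supremum formula $\int_V w\,d\Df=\sup\{\int_V h\,d\Df:h\in\Lipc(V),\ 0\le h\le w\}$.
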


\begin{remark}
    We remark here that the condition that $w\geq c>0$ holds trivially if $\Omega$ is bounded.
\end{remark}

\begin{proof}
    We will first prove the forward direction of (i). To that end, suppose $f\in BV(\Omega;w)$. By Lemma \ref{lem:BVwcontainment}(i), $f\in BV_\textrm{loc}(\Omega)$. Then, by Theorem \ref{thm:EG5.1}, there exists a $\Df$-measurable function $\nu:\Omega\to\R^n$ such that
    \begin{equation}
    \label{eqn:EGnu=1ae}
        |\nu(x)|=1\qquad\Df\textrm{-a.e}.
    \end{equation}
    and
    \begin{equation}
    \label{eqn:EGstructthmeqn}
        \int_\Omega f\div\varphi\,dx=-\int_\Omega \varphi\cdot\nu\,d\Df
    \end{equation}
    for all $\varphi\in\Lipc(\Omega;\R^n)$. By (\ref{eqn:EGstructthmeqn}) and the definition of $\Dfw(\Omega)$, we get that
    \begin{equation}
    \label{eqn:varwisphidotnu}
        \Dfw(\Omega)=\sup\left\{\int_\Omega \varphi\cdot\nu\,d\Df:\varphi\in\Lipc(\Omega;\R^n),|\varphi|\leq w\right\}
    \end{equation}
    Now, note that if $|\varphi|\leq w$, then $|\varphi\cdot\nu|\leq w|\nu|\leq w$ $\Df$-a.e. By this fact and (\ref{eqn:varwisphidotnu}), we have that
    \[
        \Dfw(\Omega)\leq\int_\Omega w\,d\Df.
    \]
    It remains to show the inequality in the other direction.

    To that end, we first fix an open set $V\Subset\Omega$ and let $\delta>0$. Since $V\Subset\Omega$ and $f\in\BVloc(\Omega)$, note that $\Df(V)<\infty$. Next, we define a new function $\nu':\Omega\to\R^n$ by
    \[
        \nu'(x)=\begin{cases}
            \nu(x)&\textrm{if }|\nu(x)|=1\\
            0&\textrm{otherwise}.
        \end{cases}
    \]
    By (\ref{eqn:EGnu=1ae}), $\nu'=\nu$ $\Df$-a.e. By definition, $|\nu'(x)|\leq 1$ for all $x\in\Omega$. Thus, we may invoke \cite[Theorem 1.15]{EG} to obtain a continuous function $\overline{\nu}_\delta:\R^n\to\R^n$ so that
    \[
        \mu(\{x\in V:\overline{\nu}_\delta(x)\neq\nu'(x)\})<\delta.
    \]
    In addition, the construction in \cite{EG} ensures that $|\overline{\nu}_\delta(x)|\leq\sup_\Omega |\nu'(x)|\leq 1$. Now, let $\eta_\e$ be the standard mollifier, and set $\overline{\nu}_{\e,\delta}=\overline{\nu}_\delta*\eta_\e$. Then, $\overline{\nu}_{\e,\delta}\to\overline{\nu}_\delta$ on $\R^n$ and $\overline{\nu}_{\e,\delta}\in C^\infty(\R^n)$ for all $\e>0$. Thus, for any nonnegative $u\in\Lipc(V)$ with $u\leq w$ and $\delta>0$, $u\overline{\nu}_{\e,\delta}\in\Lipc(V;\R^n)$ with $|u\overline{\nu}_{\e,\delta}|\leq w$. Thus,
    \begin{align*}
        \Dfw(\Omega)&=\sup\left\{\int_\Omega \varphi\cdot\nu\,d\Df:\varphi\in\Lipc(\Omega;\R^n),|\varphi|\leq w\right\}\\
        &\geq\lim_{\e\to0^+}\int_V u\overline{\nu}_{\e,\delta}\cdot\nu\,d\Df\\
        &=\int_V u\overline{\nu}_\delta\cdot\nu\,d\Df\\
        &=\int_{V\cap\{\overline{\nu}_\delta=\nu'\}}u\,d\Df+\int_{V\cap\{\overline{\nu}_\delta\neq\nu'\}}u\overline{\nu}_\delta\cdot\nu\,d\Df,
    \end{align*}
    where in the second to last equality, we used the Dominated Convergence Theorem, and in the last equality, we used the fact that $\nu'=\nu$ $\mu$-a.e. Taking $\delta\to 0^+$ and applying the Dominated Convergence Theorem again, we obtain
    \[
        \Dfw(\Omega)\geq\int_V u\,d\Df
    \]
    for all nonnegative $u\in\Lipc(V)$ with $u\leq w$. In particular, if we choose a nonnegative, increasing sequence $\{w_k\}_{k=1}^\infty\subseteq\Lipc(V)$ such that $w_k\to w$, then
    \[
        \Dfw(\Omega)\geq\lim_{k\to\infty}\int_V w_k\,d\Df=\int_V w\,d\Df
    \]
    by the Monotone Convergence Theorem. Finally, we note that $V\Subset\Omega$ was arbitrary. Thus, we can choose an ascending sequence of open sets $V_m\Subset\Omega$ such that $\bigcup_{m=1}^\infty V_m=\Omega$ and use the Monotone Convergence Theorem to get
    \[
        \Dfw(\Omega)\geq\lim_{m\to\infty}\int_{V_m}w\,d\Df=\int_\Omega w\,d\Df.
    \]
    This shows the inequality in the other direction. Finally, the equality
    \[
        \Dfw(\Omega)=\int_\Omega w\,d\Df
    \]
    immediately gives that $w\in L^1(\Omega;d\Df)$ since $f\in BV(\Omega;w)$. This shows the forward direction, and additionally shows (\ref{eqn:varwiswdmu}).

    For the backward direction of (i), suppose $f\in BV_\mathrm{loc}(\Omega)$ and $w\in L^1(\Omega;d\Df)$. By Theorem \ref{thm:EG5.1}, there exists a $\Df$-measurable function $\nu:\Omega\to\R^n$ such that $|\nu(x)|=1$ $\Df$-a.e. and
    \[
        \int_\Omega f\div\varphi\,dx=-\int_\Omega \varphi\cdot\nu\,d\Df
    \]
    for all $\varphi\in\Lipc(\Omega;\R^n)$. For all $\varphi\in\Lipc(\Omega;\R^n)$ with $|\varphi|\leq w$, $|\varphi\cdot\nu|\leq w$ $\Df$-a.e. Hence, for all $\varphi\in\Lipc(\Omega;\R^n)$ with $|\varphi|\leq w$,
    \[
        \int_\Omega f\div\varphi\,dx=-\int_\Omega \varphi\cdot\nu\,d\Df\leq\int_\Omega w\,d\Df<\infty,
    \]
    where we used that $w\in L^1(\Omega;d\Df)$. Thus,
    \[
        \Dfw(\Omega)=\sup\left\{\int_\Omega f\div\varphi\,dx:\varphi\in\Lipc(\Omega;\R^n),|\varphi|\leq w\right\}\leq\int_\Omega w\,d\Df<\infty,
    \]
    so $f\in BV(\Omega;w)$. This shows the backwards direction of (i).

    The proof of (ii) is analogous to the proof (i) by simply replacing $\Omega$ by $V\Subset\Omega$ when necessary. And (iii) follows from (i) and Lemma \ref{lem:BVwcontainment}(ii).
\end{proof}

With Theorem \ref{thm:varwmeasure} in hand, the proof of Theorem \ref{thm:structuretheorem} is easy.

\begin{proof}[Proof of Theorem \ref{thm:structuretheorem}]
    This proof follows from by substituting $d\Dfw=w\,d\Df$ into the unweighted structure theorem \cite[Theorem 5.1]{EG}.
\end{proof}

%%%%%%%%%%%%%%%%%%%%%%%%%%%%%%%%%%%%%%%%%%%%%%%%
%%%%%%%%%%%%%%%%%%%%%%%%%%%%%%%%%%%%%%%%%%%%%%%%
%%%%%%%%%%%%%%%%%%%%%%%%%%%%%%%%%%%%%%%%%%%%%%%%
%%%%%%%%%%%%%%%%%%%%%%%%%%%%%%%%%%%%%%%%%%%%%%%%
%%%%%%%%%%%%%%%%%%%%%%%%%%%%%%%%%%%%%%%%%%%%%%%%
%%%%%%%%%%%%%%%%%%%%%%%%%%%%%%%%%%%%%%%%%%%%%%%%

%\newpage

\section{Sets of Finite $w$-Perimeter}
\label{sec:finiteperimeter}

A natural question to ask is whether every positive, lower semicontinuous weight $w$ admits a set of finite $w$-perimeter. The following lemma answers this question affirmatively. Namely, in the unweighted setting, we have that $W^{1,1}(\Omega)\subsetneq BV(\Omega)$ and $W^{1,1}_{\textrm{loc}}(\Omega)\subsetneq\BVloc(\Omega)$, where the fact that the containments are proper is shown by the existence of sets of finite perimeter. See, for example, \cite[pp. 197-198]{EG}. We now prove the equivalent statement in the weighted setting.

\begin{lemma}
\label{lem:sobolevcontainment}
    Let $w:\R^n\to(0,\infty]$ be lower semicontinuous. Then, $W^{1,1}(\Omega;w)\subsetneq BV(\Omega;w)$, and $W^{1,1}_\mathrm{loc}(\Omega;w)\subsetneq\BVloc(\Omega;w)$.
\end{lemma}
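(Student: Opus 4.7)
The inclusion $W^{1,1}(\Omega;w)\subseteq BV(\Omega;w)$, together with its local analogue, will follow from a one-line integration-by-parts estimate. Given $f\in W^{1,1}(\Omega;w)$ with weak gradient $\nabla f\in L^1(\Omega;w;\R^n)$ and any $\varphi\in\Lipc(\Omega;\R^n)$ with $|\varphi|\leq w$, I have
\[
\int_\Omega f\div\varphi\, dx=-\int_\Omega\nabla f\cdot\varphi\, dx\leq\int_\Omega|\nabla f|\, w\, dx<\infty,
\]
and taking the supremum bounds $\Dfw(\Omega)$. Restricting to $V\Subset\Omega$ produces $W^{1,1}_\mathrm{loc}(\Omega;w)\subseteq\BVloc(\Omega;w)$.

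For strictness, the plan is to exhibit an indicator function in $BV(\Omega;w)\setminus W^{1,1}(\Omega;w)$. Choose a ball $B(x_0,R)\Subset\Omega$ on which $w$ is integrable, and apply Fubini in polar coordinates about $x_0$:
\[
\int_0^R\!\int_{\partial B(x_0,s)}\! w\, d\mathcal{H}^{n-1}\, ds=\int_{B(x_0,R)} w\, dx<\infty,
\]
so for almost every $s\in(0,R)$ the inner integral is finite. Fix such an $s$ and set $E=B(x_0,s)$. Classical unweighted theory gives $\chi_E\in\BVloc(\Omega)$ with $\dE=\mathcal{H}^{n-1}\mres\partial E$, so Theorem \ref{thm:varwmeasure} lifts this to $\chi_E\in BV(\Omega;w)$ with $\dEw(\Omega)=\int_{\partial E}w\, d\mathcal{H}^{n-1}<\infty$.

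To conclude, I rule out $\chi_E\in W^{1,1}(\Omega;w)$ by the standard singularity argument: a weak gradient would represent $D\chi_E$ as $g\, dx$ for some locally Lebesgue-integrable $g$, whereas the BV structure forces $D\chi_E=-\nu_E\, d\mathcal{H}^{n-1}\mres\partial E$, a vector measure mutually singular with $\mathcal{L}^n$. The same $E$ witnesses strict containment of $W^{1,1}_\mathrm{loc}(\Omega;w)$ inside $\BVloc(\Omega;w)$. The one real difficulty I anticipate is the initial selection of a ball on which $w$ is integrable: this holds automatically for $A_1^*$ weights (and more generally whenever $w\in L^1_\mathrm{loc}$), but a pathological positive LSC weight may fail to be locally integrable anywhere. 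In that case I would fall back on building $E$ inside a sublevel set $\{w\leq N\}\cap B$ of positive finite measure (which must exist whenever $L^1(\Omega;w)$ is nontrivial, as otherwise both spaces collapse to $\{0\}$), and then recover sufficient boundary regularity by a further slicing/approximation argument to preserve both finite $w$-perimeter and the singularity of the distributional gradient.
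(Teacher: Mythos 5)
Your main argument is the same as the paper's: prove the containment by integration by parts, then use polar coordinates (Fubini/Tonelli) about a point of $\Omega$ to find a sphere $\partial B(x_0,s)$ on which $w$ is $\mathcal H^{n-1}$-integrable, and invoke Theorem \ref{thm:varwmeasure} to conclude $\chi_{B(x_0,s)}\in BV(\Omega;w)\setminus W^{1,1}(\Omega;w)$. The concern you raise at the end is a real one, and you deserve credit for noticing it: the slicing step needs $\int_{B(x_0,R)}w\,dx<\infty$, i.e.\ $w\in L^1_{\mathrm{loc}}$, which does not follow from positivity and lower semicontinuity alone. The paper's own proof has exactly the same tacit assumption (it simply asserts ``the left-hand side is finite since $w$ is locally integrable''). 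Your proposed fallback, however, does not rescue the statement: if $w$ is nowhere locally integrable --- for instance $w\equiv+\infty$ --- then $L^1(\Omega;w)=\{0\}$, so $W^{1,1}(\Omega;w)=BV(\Omega;w)=\{0\}$ and the containment is not proper; no slicing or sublevel-set trick can produce a nonzero indicator function in $BV(\Omega;w)$. So the correct repair is to add local integrability as a hypothesis (as the paper implicitly does, and as is automatic under $A_1$), not to salvage the pathological case. With that caveat understood, your argument is sound and coincides with the paper's.
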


\begin{proof}
    The proof of each containment is essentially the same, so we will only prove the first containment.

    To that end, suppose $f\in W^{1,1}(\Omega;w)$. Then, for all $\varphi\in\Lipc(\Omega;\R^n)$ with $|\varphi|\leq w$, integration by parts yields
    \[
        \int_\Omega f\div\varphi\,dx=-\int_\Omega Df\cdot\varphi\,dx\leq\int_\Omega |Df|\,w\,dx=\lVert Df\rVert_{L^1(\Omega;w)}<\infty.
    \]
    Thus,
    \[
        \Dfw(\Omega)\leq\lVert Df\rVert_{L^1(\Omega;w)}<\infty,
    \]
    so $f\in BV(\Omega;w)$.

    Next, we must show that the containment is proper. To that end, first note that (after translating $\Omega$ if necessary) there exists some $\e>0$ such that $B(0,\e)\subseteq\Omega$. Then, a change of variables to polar coordinates yields that
    \begin{equation}
    \label{eqn:polarcoords}
        \int_{B(0,\e)}w(x)\,dx=\int_0^\e r^{n-1}\int_{|\theta|=1}w(r,\theta)\,d\mathcal{H}^{n-1}(\theta)\,dr.
    \end{equation}
    Note that the left-hand side is finite since $w$ is locally integrable. Now, suppose for the sake of obtaining a contradiction that $\chi_{B(0,\delta)}\not\in BV(\Omega;w)$ for all $0<\delta<\e$. Then, for all $0<\delta<\e$,
    \[
        \int_{|\theta|=1}w(\delta,\theta)\,d\mathcal{H}^{n-1}(\theta)=\int_{\partial B(0,\delta)}w\,d\mathcal{H}^{n-1}=\int_{\partial B(0,\delta)}w\,d\lVert\partial B(0,\delta)\rVert=\infty,
    \]
    where in the last equality we used Theorem \ref{thm:varwmeasure}(i). This implies that the right-hand side of (\ref{eqn:polarcoords}) is infinite, a contradiction. Thus, there exists some $0<\delta<\e$ such that $\chi_{B(0,\delta)}\in BV(\Omega;w)$. It is certainly the case that $\chi_{B(0,\delta)}\not\in W^{1,1}(\Omega;w)$, so this shows that the containment is proper.
\end{proof}

\begin{remark}
    These containments are important, as they ensure that there exists a set of finite $w$-perimeter, no matter the weight $w$. In fact, the proof above shows that if $B(x,R)\subseteq\Omega$, then $B(x,r)$ is a set of finite $w$-perimeter for a.e. $r\in(0,R]$.
\end{remark}

\begin{remark}
    \label{rmk:weakderivativeisvariation}
    In fact, if $f\in W^{1,1}(\Omega;w)$, then
    \[
        \Dfw(\Omega)=\lVert Df\rVert_{L^1(\Omega;w)}.
    \]
    Indeed, we have that $W^{1,1}(\Omega;w)\subseteq W^{1,1}_\textrm{loc}(\Omega)$ (by a similar argument to Lemma \ref{lem:BVwcontainment}), so by an example on pages 197-198 of \cite{EG}, we have that $d\Df=|Df|\,dx$. Hence, by Theorem \ref{thm:varwmeasure}(i),
    \[
        \Dfw(\Omega)=\int_\Omega w\,d\Df=\int_\Omega |Df|\,w\,dx=\lVert Df\rVert_{L^1(\Omega;w)}.
    \]
\end{remark}

Now, note that we have from Lemma \ref{lem:BVwcontainment}(i) that $BV(\Omega;w)\subseteq BV_\textrm{loc}(\Omega)$. Thus, every set of finite $w$-perimeter in $\Omega$ has locally finite perimeter in $\Omega$. And by Lemma \ref{lem:BVwcontainment}(ii), if $w\geq c>0$ on $\Omega$, then every set of finite $w$-perimeter in $\Omega$ has finite perimeter in $\Omega$. In general, however, there can exist a set of finite $w$-perimeter in $\Omega$ that does not have finite perimeter in $\Omega$. Conversely, there can exist a set of finite perimeter in $\Omega$ that does not have finite $w$-perimeter in $\Omega$. The following examples illustrate these facts.

\begin{example}
    Consider $\Omega=\R^n$, $n\geq2$,
    \[
        w(x)=\begin{cases}
            |x|^{-n+\frac{1}{2}}&\textrm{if }|x|>1\\
            1&\textrm{if }|x|\leq1,
        \end{cases}
    \]
    and $E=\R^{n-1}\times(-1,1)$. Then, by \cite[Theorem 5.16]{EG}, for all $\varphi\in\Lipc(\R^n;\R^n)$,
    \[
        \int_E \div\varphi\,dx=\int_{\partial E}\varphi\cdot\nu\,d\mathcal{H}^{n-1},
    \]
    where $\nu(x)=(0,\ldots,0,-1)$ for all $x\in\R^{n-1}\times\{-1\}$ and $\nu(x)=(0,\ldots,0,1)$ for all $x\in\R^{n-1}\times\{1\}$. Choosing $\varphi$ that approximate $\nu$, we see that
    \[
        \dE(\R^n)=\int_{\partial E}d\mathcal{H}^{n-1}=\infty,
    \]
    so $E$ does not have finite perimeter in $\R^n$. However, for all $\varphi\in\Lipc(\R^n;\R^n)$ with $|\varphi|\leq w$, we have that $|\varphi\cdot\nu|\leq w$. Thus,
    \[
        \dEw(\R^n)\leq\int_{\partial E}w\,d\mathcal{H}^{n-1}<\infty,
    \]
    so $E$ does have finite $w$-perimeter in $\R^n$.
\end{example}

\iffalse
\begin{example}
    Consider $\Omega=\R$, $w(x)=|x|^{-1/2}$, and\joseph{We might have to remove this example since $\chi_E\not\in L^1(\R;w)$, which a requirement by the way that we have defined $BV(\R;w)$.}
    \[
        E=\bigcup_{n=1}^\infty\left((2n)^4,(2n+1)^4\right).
    \]
    Then, by \cite[Theorem 5.16]{EG}, for all $\varphi\in\Lipc(\R)$,
    \[
        \int_E\div\varphi\,dx=\int_{\partial E}\varphi\nu\,d\mathcal{H}^0,
    \]
    where
    \[
        \nu(x)=\begin{cases}
            -1&\textrm{if }x=(2n)^4\\
            1&\textrm{if }x=(2n+1)^4.
        \end{cases}
    \]
    By the properties of the counting measure $\mathcal{H}^0$,
    \[
        \int_{\partial E}\varphi\nu\,d\mathcal{H}^0=\sum_{x\in\partial E}\varphi(x)\nu(x)=\sum_{m=2}^\infty \varphi(m^4)\nu(m^4).
    \]
    Choosing $\varphi$ that approximate $\nu$, we see that
    \[
        \dE(\R)=\sum_{m=2}^\infty 1=\infty,
    \]
    so $E$ does not have finite perimeter in $\R$. However, for all $\varphi\in\Lipc(\R)$ with $|\varphi|\leq w$, we have that $|\varphi\nu|\leq w$. Thus,
    \[
        \dEw(\R)\leq\sum_{m=2}^\infty w(m^4)=\sum_{m=2}^\infty\frac{1}{m^2}<\infty,
    \]
    so $E$ does have finite $w$-perimeter.
\end{example}
\fi

\begin{example}
    Consider $\Omega=\R$, $w(x)=|x|^{-1/2}$, and $E=(0,1)$. Then, by \cite[Theorem 5.16]{EG}, for all $\varphi\in\Lipc(\R)$,
    \[
        \int_E\div\varphi\,dx=\int_{\partial E}\varphi\nu\,d\mathcal{H}^0=\varphi(1)-\varphi(0),
    \]
    where $\nu(0)=-1$ and $\nu(1)=1$. For $|\varphi|\leq 1$,
    \[
        \int_E\div\varphi\,dx\leq|\varphi(1)-\varphi(0)|\leq|\varphi(1)|+|\varphi(0)|\leq 2.
    \]
    Hence,
    \[
        \dE(\Omega)\leq 2<\infty,
    \]
    so $E$ has finite perimeter in $\R$. However, for $|\varphi|\leq w$, letting $\varphi$ approximate $-w$ gives
    \[
        \dEw(\Omega)\geq w(0)-w(1)=\infty,
    \]
    so $E$ does not have finite $w$-perimeter.
\end{example}

%%%%%%%%%%%%%%%%%%%%%%%%%%%%%%%%%%%%%%%%%%%%%%%%
%%%%%%%%%%%%%%%%%%%%%%%%%%%%%%%%%%%%%%%%%%%%%%%%
%%%%%%%%%%%%%%%%%%%%%%%%%%%%%%%%%%%%%%%%%%%%%%%%
%%%%%%%%%%%%%%%%%%%%%%%%%%%%%%%%%%%%%%%%%%%%%%%%
%%%%%%%%%%%%%%%%%%%%%%%%%%%%%%%%%%%%%%%%%%%%%%%%
%%%%%%%%%%%%%%%%%%%%%%%%%%%%%%%%%%%%%%%%%%%%%%%%

%\newpage

\section{Smooth Approximation in $BV(\Omega;w)$}
\label{sec:smoothapproximation}

Our goal in this section is to prove Theorem \ref{thm:EG5.3analogue}, a weighted analogue to \cite[Theorem 5.3]{EG}, which constructs smooth approximations for functions in $BV(\Omega)$. We begin by proving a weighted analogue for \cite[Theorem 5.2]{EG}.

\begin{theorem}[Lower Semicontinuity of $\Dfw$]
\label{thm:EG5.2analogue}
    Let $w:\R^n\to(0,\infty]$ be lower semicontinuous. Suppose $\{f_k\}_{k=1}^\infty\subseteq BV(\Omega;w)$ and $f_k\to f$ in $L^1_\mathrm{loc}(\Omega;w)$. Then,
    \[
        \Dfw(\Omega)\leq\liminf_{k\to\infty}\Dfkw(\Omega).
    \]
\end{theorem}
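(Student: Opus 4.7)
The plan is to mimic the standard proof of lower semicontinuity in the unweighted setting. Fix an arbitrary test function $\varphi\in\Lipc(\Omega;\R^n)$ with $|\varphi|\leq w$ pointwise. The strategy is to pass to the limit in $k$ inside the integral $\int_\Omega f_k\,\div\varphi\,dx$, yielding
\[
    \int_\Omega f\,\div\varphi\,dx=\lim_{k\to\infty}\int_\Omega f_k\,\div\varphi\,dx\leq\liminf_{k\to\infty}\Dfkw(\Omega),
\]
where the inequality uses that $\varphi$ is an admissible competitor in the definition of $\Dfkw(\Omega)$. Taking the supremum over all such $\varphi$ on the left-hand side will then yield $\Dfw(\Omega)\leq\liminf_k\Dfkw(\Omega)$, as desired.

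The main step is the justification of the limit. Since $\varphi$ is Lipschitz, $\div\varphi$ is bounded, so it suffices to show $f_k\to f$ in $L^1(K)$ with respect to ordinary Lebesgue measure, where $K:=\supp\varphi\Subset\Omega$. This is where lower semicontinuity of $w$ and the assumption $w>0$ pointwise enter. A lower semicontinuous, strictly positive function on a compact set attains a strictly positive infimum: indeed, the sets $\{w\leq 1/m\}\cap K$ are closed, decreasing, and have empty intersection (by $w>0$), hence eventually empty by compactness. Thus $c:=\inf_K w>0$, and therefore
\[
    \int_K|f_k-f|\,dx\leq\frac{1}{c}\int_K|f_k-f|\,w\,dx\xrightarrow[k\to\infty]{}0
\]
by the hypothesis $f_k\to f$ in $L^1_\mathrm{loc}(\Omega;w)$. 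This gives the desired limit and completes the argument.

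The main (mild) obstacle is precisely the extraction of a uniform positive lower bound for $w$ on $K$; this is the only point where both pointwise positivity of $w$ and lower semicontinuity are truly used together, and it is also where the statement differs from a naive weight-by-weight passage to the unweighted result. Everything else is routine once admissibility of $\varphi$ (i.e., $|\varphi|\leq w$) is used on the $f_k$ side to produce the bound $\Dfkw(\Omega)$.
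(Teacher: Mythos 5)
Your proof is correct and matches the paper's argument: both use positivity and lower semicontinuity of $w$ on a compact set to extract a uniform lower bound $c>0$, which converts $L^1_{\mathrm{loc}}(\Omega;w)$ convergence into $L^1_{\mathrm{loc}}(\Omega)$ convergence, after which passing to the limit in $\int_\Omega f_k\,\div\varphi\,dx$ and taking the supremum over admissible $\varphi$ gives the result (the paper defers the last step to the unweighted argument in Evans--Gariepy). The only difference is cosmetic: you spell out the finite-intersection argument for $\inf_K w>0$, which the paper simply asserts.
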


%\joseph{I cut out some here. I'm not sure if we should cut out more or not.}
\begin{proof}
    By assumption, for all compact $K\subseteq\Omega$,
    \[
        \lVert f_k-f\rVert_{L^1(K;w)}=\int_K |f_k-f|\,w\,dx\underset{k\to\infty}{\longrightarrow}0.
    \]
    Since $K$ is bounded and $w$ is positive and lower semicontinuous, $w$ is bounded away from 0 on $K$, say $w\geq c>0$ on $K$. Thus,
    \[
        \lVert f_k-f\rVert_{L^1(K)}=\int_K |f_k-f|\,dx\leq\frac{1}{c}\int_K |f_k-f|\,w\,dx\underset{k\to\infty}{\longrightarrow}0,
    \]
    so $f_k\to f$ in $L^1_\textrm{loc}(\Omega)$. In particular, for $\varphi\in\Lipc(\Omega;\R^n)$ with $|\varphi|\leq w$,
    \[
        \int_\Omega f\div\varphi\,dx=\lim_{k\to\infty}\int_\Omega f_k\div\varphi\,dx.
    \]
    The remainder of the proof follows analogously to \cite[Theorem 5.2]{EG}.
\end{proof}

With this result in hand, we quickly remark that $BV(\Omega;w)$ is Banach.

\begin{lemma}
    Let $w:\R^n\to(0,\infty]$ be lower semicontinuous. $BV(\Omega;w)$ is a Banach space under the norm
    \begin{equation}
    \label{eqn:BVnorm}
        \lVert f\rVert_{BV(\Omega;w)}=\lVert f\rVert_{L^1(\Omega;w)}+\Dfw(\Omega).
    \end{equation}
\end{lemma}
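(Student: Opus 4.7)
The plan is to verify the norm axioms (briefly) and then establish completeness, which is the main content. For the axioms, positivity and definiteness follow from the $L^1(\Omega;w)$ part, while homogeneity and the triangle inequality for $\|Df\|_w(\Omega)$ are immediate from its supremum definition: for any admissible test vector field $\varphi$ with $|\varphi|\leq w$, linearity of the integral gives
\[
\int_\Omega (f+g)\,\mathrm{div}\,\varphi\,dx \leq \|Df\|_w(\Omega) + \|Dg\|_w(\Omega),
\]
and taking the supremum yields the triangle inequality.

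For completeness, I would take a Cauchy sequence $\{f_k\}\subseteq BV(\Omega;w)$. Since $\|f_k - f_j\|_{L^1(\Omega;w)} \leq \|f_k - f_j\|_{BV(\Omega;w)}$, the sequence is Cauchy in $L^1(\Omega;w)$, which is a Banach space (standard, since $w$ is positive and measurable). Thus there exists $f\in L^1(\Omega;w)$ with $f_k\to f$ in $L^1(\Omega;w)$, and in particular in $L^1_{\mathrm{loc}}(\Omega;w)$. Being Cauchy, $\{f_k\}$ has bounded $BV(\Omega;w)$-norm, so $\|Df_k\|_w(\Omega)\leq M$ for some $M$; by the lower semicontinuity result (Theorem \ref{thm:EG5.2analogue}),
\[
\|Df\|_w(\Omega) \leq \liminf_{k\to\infty}\|Df_k\|_w(\Omega) \leq M < \infty,
\]
so $f\in BV(\Omega;w)$.

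It remains to show $\|f_k - f\|_{BV(\Omega;w)}\to 0$. The $L^1(\Omega;w)$ term already vanishes, so the only issue is the variation term. Fix $\varepsilon>0$ and choose $N$ so that $\|D(f_k - f_j)\|_w(\Omega) < \varepsilon$ whenever $k,j\geq N$. For fixed $k\geq N$, since $f_j\to f$ in $L^1(\Omega;w)$, we have $f_k - f_j \to f_k - f$ in $L^1_{\mathrm{loc}}(\Omega;w)$ as $j\to\infty$. Applying Theorem \ref{thm:EG5.2analogue} to this sequence gives
\[
\|D(f_k - f)\|_w(\Omega) \leq \liminf_{j\to\infty}\|D(f_k - f_j)\|_w(\Omega) \leq \varepsilon,
\]
which shows $\|f_k - f\|_{BV(\Omega;w)}\to 0$ and completes the proof.

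The argument has no real obstacle since all the heavy lifting is done by Theorem \ref{thm:EG5.2analogue}; the only subtle point is the transfer of convergence from $L^1(\Omega;w)$ to $L^1_{\mathrm{loc}}(\Omega;w)$ (trivial by restriction) so that the lower semicontinuity theorem is applicable both to the limit $f$ and to each difference $f_k - f$.
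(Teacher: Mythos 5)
Your proof is correct and takes essentially the same route as the paper: reduce to Cauchyness in $L^1(\Omega;w)$ to produce the limit $f$, then apply the lower semicontinuity result (Theorem \ref{thm:EG5.2analogue}) to the differences $f_j - f_k$ as $j\to\infty$ to conclude $\lVert D(f_k - f)\rVert_w(\Omega)\to 0$. The only difference is that you include a separate intermediate step showing $f\in BV(\Omega;w)$ via the uniform bound $\lVert Df_k\rVert_w(\Omega)\leq M$, whereas the paper leaves this implicit (it follows at once from $\lVert D(f-f_k)\rVert_w(\Omega)<\varepsilon$ together with $f_k\in BV(\Omega;w)$); this is harmless and arguably makes the argument clearer.
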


\begin{proof}
    It is easy to see that \eqref{eqn:BVnorm} is a norm. Thus, it remains to show completeness. To that end, suppose $\{f_k\}_{k=1}^\infty\subseteq BV(\Omega;w)$ is Cauchy. Let $\e>0$. Then, there exists some $K\in\N$ such that for all $j,k>K$,
    \[
        \lVert f_j-f_k\rVert_{L^1(\Omega;w)}+\lVert D(f_j-f_k)\rVert_w(\Omega)=\lVert f_j-f_k\rVert_{BV(\Omega;w)}<\e.
    \]
    Hence, $\{f_k\}_{k=1}^\infty$ is Cauchy in $L^1(\Omega;w)$. Thus, there exists some $f\in L^1(\Omega;w)$ such that $f_k\to f$ in $L^1(\Omega;w)$.  Now, by Theorem \ref{thm:EG5.2analogue}, for $k>K$,
    \[
        \lVert D(f-f_k)\rVert_w(\Omega)\leq\liminf_{j\to\infty}\lVert D(f_j-f_k)\rVert_w(\Omega)<\e.
    \]
    Thus, $\lVert D(f-f_k)\rVert_w(\Omega)\to0$ as $k\to\infty$. Combined with the fact that $f_k\to f$ in $L^1(\Omega;w)$, this gives us that
    \[
        \lVert f-f_k\rVert_{BV(\Omega;w)}=\lVert f-f_k\rVert_{L^1(\Omega;w)}+\lVert D(f-f_k)\rVert_w(\Omega)\underset{k\to\infty}{\longrightarrow}0,
    \]
    so $f_k\to f$ in $BV(\Omega;w)$. Thus, $BV(\Omega;w)$ is complete.
\end{proof}

Now, we turn our attention to proving Theorem \ref{thm:EG5.3analogue}, that is, approximating functions in $BV(\Omega;w)$ by smooth functions.

\begin{definition}
\label{def:approximable}
    Let $w\in A_1^*$, $f\in BV(\Omega;w)$. We say that $f$ is \textbf{$w$-approximable} if
    \begin{equation}
    \label{eqn:wLebesguepts}
         \lim_{\e\to0}\fint_{B(x,\e)}|w(y)-w(x)|\,dy=0\qquad\textrm{for }\Df\textrm{-a.e. }x.
    \end{equation}
\end{definition}

A few remarks are in order to explain the $w$-approximability condition.

\begin{remark}
\label{rmk:wapprox}
    Note that condition (\ref{eqn:wLebesguepts}) is quite a general condition. It simply says that $\Df$-a.e. point is a Lebesgue point of $w$. Intuitively, it ensures that $w$ behaves nicely on the support of the part of $\Df$ that is mutually singular with the Lebesgue measure. For example, if $f\in W^{1,1}_{\text{loc}}(\Omega,w)$, then $d\Df=|Df|\,dx$. In this case, (\ref{eqn:wLebesguepts}) is satisfied by the Lebesgue Differentiation Theorem. Moreover, if \textit{every} point in $\Omega$ is a Lebesgue point of $w$ (e.g. if $w$ is continuous or a power weight), then (\ref{eqn:wLebesguepts}) holds for every $f\in BV(\Omega;w)$.
\end{remark}

\begin{remark}
    We remark here that the condition that $f$ is $w$-approximable is sufficient but not necessary to obtain the convergence $\Dfkw(\Omega)\to\Dfw(\Omega)$. For example, consider the $A_1^*$ weight
    \[
        w(x)=\begin{cases}
            1&\textrm{if }x\leq0\\
            2&\textrm{if }x>0,
        \end{cases}
    \]
    the $BV(\R;w)$ function $f=\chi_{(0,1)}$, and the smooth functions $f_k=\eta_{1/k}*\chi_{(-1/k,1)}$, where $\eta$ is the standard mollifier. Note that
    \[
        \begin{cases}
            f_k-f=f_k\cdot\chi_{(0,1)^c}\\
            \textrm{spt}(f_k)\subseteq[-2/k,1+1/k]\\
            0\leq f_k\leq 1.
        \end{cases}
    \]
    Hence,
    \[
        \int_{\R} |f_k-f|\,w\,dx\leq\int_{\R}\chi_{[-2/k,0]\cup[1,1+1/k]}\cdot w\,dx=\frac{4}{k}\underset{k\to\infty}{\longrightarrow}0.
    \]
    Thus, $f_k\to f$ in $L^1(\R;w)$. Moreover, for all $k\in\N$,
    \begin{align*}
        \Dfkw(\R)&=\int_{\R}\left|\frac{d}{dx}\int_{\R}\eta_{1/k}(x-y)\chi_{(-1/k,1)}(y)\,dy\right|\,w(x)\,dx\\
        &=\int_{\R}\left|\int_{-1/k}^1\frac{d\eta_{1/k}}{dx}(x-y)\,dy\right|\,w(x)\,dx\\
        &=\int_{\R}\left|\int_{-1/k}^1\frac{d\eta_{1/k}}{dy}(x-y)\,dy\right|\,w(x)\,dx\\
        &=\int_{\R}\left|\eta_{1/k}(x-1)-\eta_{1/k}(x+1/k)\right|\,w(x)\,dx\\
        &=3,
    \end{align*}
    and $\Dfw(\R)=3$, so certainly $\Dfkw(\R)\to\Dfw(\R)$. However, $\Df(\{0\})=1>0$ and
    \[
        \lim_{\e\to0}\fint_{B(0,\e)}|w(y)-w(0)|\,dy=\frac{1}{2}\neq0,
    \]
    so $f$ is not $w$-approximable.
\end{remark}

\begin{remark}
    Although the condition that $f$ is $w$-approximable is not necessary, the conclusion of Theorem \ref{thm:EG5.3analogue}(i) is not true for general $f$ and $w$. Indeed, consider the $A_1^*$ weight
    \[
        w(x)=\begin{cases}
            1&\textrm{if }x=0\textrm{ or }x=1\\
            2&\textrm{otherwise},
        \end{cases}
    \]
    and the $BV(\R;w)$ function $f=\chi_{(0,1)}$. For the sake of obtaining a contradiction, suppose $\{f_k\}_{k=1}^\infty\subseteq C^\infty(\R)\cap BV(\R;w)$ such that $f_k\to f$ in $L^1(\R;w)$ and $\Dfkw(\R)\to\Dfw(\R)$. Then,
    \[
        2\Dfk(\R)=2\int_\R |Df_k|\,dx=\int_{\R}|Df_k|\,w\,dx=\Dfkw(\R)\to\Dfw(\R)=2,
    \]
    so
    \begin{equation}
    \label{eqn:Dfkto1}
        \Dfk(\R)\to 1.
    \end{equation}
    Note that since $f_k\to f$ in $L^1(\R;w)$ and $w\approx 1$, we actually have that $f_k\to f$ in $L^1(\R)$, so there exists a subsequence $\{f_{k_j}\}_{j=1}^\infty$ such that $f_{k_j}\to f$ pointwise a.e. on $\R$. Then, there exists some $x_1\in(-\infty,0)$, $x_2\in(0,1)$, and $x_3\in (1,\infty)$ such that $f_{k_j}(x_1)\to f(x_1)=0$, $f_{k_j}(x_2)\to f(x_2)=1$ and $f_{k_j}(x_3)\to f(x_3)=0$. Then, using the definition of variation for real-valued functions on $\R$ (see \cite[Definition 5.11]{EG}),
    \[
        \lVert Df_{k_j}\rVert(\R)\geq\lVert Df_{k_j}\rVert([x_1,x_3])\geq|f_{k_j}(x_3)-f_{k_j}(x_2)|+|f_{k_j}(x_2)-f_{k_j}(x_1)|\underset{j\to\infty}{\longrightarrow}2,
    \]
    which contradicts \eqref{eqn:Dfkto1}. Thus, the conclusion of Theorem \ref{thm:EG5.3analogue}(i) is not true for any smooth approximation for this choice of $f$ and $w$.
\end{remark}

\begin{remark}
    Although the $w$-approximability condition is not optimal to obtain the conclusion of Theorem \ref{thm:EG5.3analogue}(i), it is natural since it will allow us to use mollification as our method of proof.
\end{remark}

For the proof of Theorem \ref{thm:EG5.3analogue}(i), we fix the following notation:
\[
    \Omega_\e:=\{x\in\Omega:\textrm{dist}(x,\partial\Omega)>\e\},\qquad\textrm{and}\qquad I_\e(E)=\{x:\textrm{dist}(x,E)<\e\}.
\]
To prove Theorem \ref{thm:EG5.3analogue}(i), we will also make use of the following result from \cite{AFP}.

\begin{lemma}[{\cite[Proposition 3.2]{AFP}}]
\label{lem:AFPprop3.2}
    Suppose $f\in\BVloc(\Omega)$. Then,
    \begin{enumerate}[(i)]
        \item for all $\psi\in\Lipc(\Omega)$, $f\psi\in\BVloc(\Omega)$, and $[D(f\psi)]=\psi\,[Df]+f\,D\psi\,dx$, and
        \item $D(f*\eta_\e)=[Df]*\eta_\e$ in $\Omega_\e$,
    \end{enumerate}
    where $\eta_\e$ is the standard mollifier.
\end{lemma}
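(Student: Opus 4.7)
The plan is to deduce both parts directly from the distributional characterization of $[Df]$ provided by the structure theorem (Theorem \ref{thm:EG5.1}), by choosing the right test vector field in each case. Nothing beyond the product rule for divergence is required.

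For part (i), fix an arbitrary test field $\varphi\in\Lipc(\Omega;\R^n)$. The pointwise product rule
\[
\psi\,\div\varphi=\div(\psi\varphi)-D\psi\cdot\varphi
\]
holds a.e.\ because $\psi\in\Lipc(\Omega)$ and $\varphi$ is Lipschitz. Since $\varphi$ is compactly supported, $\psi\varphi\in\Lipc(\Omega;\R^n)$, so the structure theorem applied to $f\in\BVloc(\Omega)$ gives
\[
\int_\Omega f\,\div(\psi\varphi)\,dx=-\int_\Omega\psi\,\varphi\cdot d[Df].
\]
Combining these two identities yields
\[
\int_\Omega(f\psi)\,\div\varphi\,dx=-\int_\Omega\varphi\cdot d\bigl(\psi\,[Df]+f\,D\psi\,dx\bigr),
\]
which is the claimed distributional identity for $[D(f\psi)]$. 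That $f\psi\in\BVloc(\Omega)$ then follows by taking $|\varphi|\le 1$ and passing to the supremum:
\[
\lVert D(f\psi)\rVert(V)\le\|\psi\|_{\infty}\,\Df(V)+\|D\psi\|_{\infty}\,\|f\|_{L^1(V)}<\infty
\]
for every $V\Subset\Omega$, using $f\in L^1_{\mathrm{loc}}(\Omega)$ and the local finiteness of $\Df$.

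For part (ii), fix $x\in\Omega_\e$; then the translated bump $y\mapsto\eta_\e(x-y)$ has support in $B(x,\e)\Subset\Omega$. For each coordinate index $i$ take the test field $\varphi_i(y):=\eta_\e(x-y)\,e_i\in\Lipc(\Omega;\R^n)$. Differentiating under the integral (justified by the compact support of $\eta_\e$ and $f\in L^1_{\mathrm{loc}}$) and using $\partial_{x_i}\eta_\e(x-y)=-\partial_{y_i}\eta_\e(x-y)=-\div_y\varphi_i(y)$, we obtain
\[
\partial_i(f*\eta_\e)(x)=\int_\Omega f(y)\,\partial_{x_i}\eta_\e(x-y)\,dy=-\int_\Omega f(y)\,\div_y\varphi_i(y)\,dy.
\]
Invoking Theorem \ref{thm:EG5.1} on the right hand side gives
\[
\partial_i(f*\eta_\e)(x)=\int_\Omega\eta_\e(x-y)\,e_i\cdot d[Df](y)=([Df]_i*\eta_\e)(x),
\]
which is exactly $D(f*\eta_\e)=[Df]*\eta_\e$ componentwise on $\Omega_\e$.

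The only real obstacle is bookkeeping: one must keep track of signs, the vector-measure pairing $\int\varphi\cdot d[Df]=\int\varphi\cdot\nu\,d\Df$, and the restriction $x\in\Omega_\e$, which is needed precisely to guarantee that $\eta_\e(x-\cdot)$ is a legitimate test field compactly supported in $\Omega$. No compactness, regularization, or approximation argument beyond these manipulations is required; the whole proof reduces to choosing the correct test vector field and invoking the structure theorem.
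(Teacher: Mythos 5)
The paper does not prove this lemma; it is cited directly from \cite[Proposition 3.2]{AFP}, so there is no in-paper argument to compare against. Your proof is correct and is the standard one: part (i) follows from the Leibniz rule $\div(\psi\varphi)=\psi\,\div\varphi+D\psi\cdot\varphi$ together with the structure theorem applied to the admissible test field $\psi\varphi\in\Lipc(\Omega;\R^n)$, and the $\BVloc$ estimate $\lVert D(f\psi)\rVert(V)\leq\lVert\psi\rVert_\infty\Df(V)+\lVert D\psi\rVert_\infty\lVert f\rVert_{L^1(V)}$ is the right one; part (ii) follows by differentiating $f*\eta_\e$ under the integral sign and recognizing $\eta_\e(x-\cdot)e_i$ as a legitimate test field for $x\in\Omega_\e$, which is precisely where the restriction to $\Omega_\e$ enters. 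The sign bookkeeping ($\partial_{x_i}\eta_\e(x-y)=-\partial_{y_i}\eta_\e(x-y)$) and the identification $\int\varphi\cdot d[Df]=\int\varphi\cdot\nu\,d\Df$ are handled correctly, so no gaps remain.
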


\begin{proof}[Proof of Theorem \ref{thm:EG5.3analogue}(i)]
    First, note that $\Omega$ can be written as the union of a countable family of bounded open sets $\Omega_k\subseteq\Omega$, $k\in\N$, such that each $\Omega_k$ has positive distance from the boundary of $\Omega$ and each point in $\Omega$ belongs to at most $4$ sets $\Omega_k$. This follows from a standard construction that can be found in \cite{AFP} or \cite{EG}. We next choose a partition of unity with respect to the covering $\Omega_k$, that is, positive functions $\zeta_k\in C_c^\infty(\Omega_k)$ such that $\sum_k\zeta_k\equiv1$ on $\Omega$. Fix $\e>0$ and notice that for each $k\geq 1$ there exists $\e_k>0$ such that
    \[
        \begin{cases}
            \e_k<\e,\\
            \text{spt}((f\zeta_k)\ast\eta_{\e_k})\subseteq\Omega_k,\\
            I_{\e_k}(\Omega_k)\subseteq\Omega,\\
            \int_\Omega |(f\zeta_k)\ast\eta_{\e_k}-f\zeta_k|\,w\,dx<2^{-k}\e,\\
            \int_\Omega|(fD\zeta_k)\ast\eta_{\e_k}-fD\zeta_k|\,w\,dx<2^{-k}\e.
        \end{cases}
    \]
    The last two conditions follow from a standard fact about approximate identities in $L^1(\Omega;w)$ for $A_1$ weights $w$. Now, define
    \[
        f_\e:=\sum_{k=1}^\infty(f\zeta_k)*\eta_{\e_k}\in C^\infty(\Omega).
    \]
    Note also that
    \[
        f:=\sum_{k=1}^\infty f\zeta_k.
    \]
    Then, we have that
    \[
        \int_\Omega |f_\e-f|\,w\,dx\leq\sum_{k=1}^\infty\int_\Omega|(f\zeta_k)*\eta_{\e_k}-f\zeta_k|\,w\,dx<\e,
    \]
    so $f_\e\to f$ in $L^1(\Omega;w)$ as $\e\to0$.

    Now, by Lemma \ref{lem:AFPprop3.2} and using the facts that $I_{\e_k}(\Omega_k)\subseteq\Omega$ and $\sum_{k=1}^\infty D\zeta_k\equiv0$, we obtain that
    \begin{align*}
        Df_\e &=\sum_{k=1}^\infty D((f\zeta_k)*\eta_{\e_k})\\
        &=\sum_{k=1}^\infty [D(f\zeta_k)]*\eta_{\e_k}\\
        &=\sum_{k=1}^\infty (\zeta_k[Df])*\eta_{\e_k}+\sum_{k=1}^\infty (fD\zeta_k)*\eta_{\e_k}\\
        &=\sum_{k=1}^\infty (\zeta_k[Df])*\eta_{\e_k}+\sum_{k=1}^\infty((fD\zeta_k)*\eta_{\e_k}-fD\zeta_k)
    \end{align*}
    in $\Omega$. Then, we obtain
    \begin{align*}
        &\Dfew(\Omega)-\Dfw(\Omega)\\
        &\qquad\qquad=\int_\Omega |Df_\e|\,w\,dx-\Dfw(\Omega)\\
        &\qquad\qquad\leq\sum_{k=1}^\infty\int_\Omega|(\zeta_k[Df])*\eta_{\e_k}|w\,dx+\e-\Dfw(\Omega)\\
        &\qquad\qquad=\sum_{k=1}^\infty\int_\Omega\left|\int_\Omega\eta_{\e_k}(x-y)\zeta_k(y)\,d[Df](y)\right|\,w(x)\,dx+\e-\lVert Df\rVert_w(\Omega)\\
        &\qquad\qquad\leq\sum_{k=1}^\infty\int_{I_{\e_k}(\Omega_k)}\int_{\Omega_k}\eta_{\e_k}(x-y)\zeta_k(y)\,d\Df(y)\,w(x)\,dx+\e-\Dfw(\Omega)\\
        &\qquad\qquad=\sum_{k=1}^\infty\int_{\Omega_k}\int_{I_{\e_k}(\Omega_k)}\eta_{\e_k}(x-y)\zeta_k(y)\,w(x)\,dx\,d\Df(y)+\e-\Dfw(\Omega)\\
        &\qquad\qquad\leq\sum_{k=1}^\infty\int_{\Omega_k}(\eta_{\e_k}*w)\zeta_k\,d\Df+\e-\sum_{k=1}^\infty\int_{\Omega_k}w\zeta_k\,d\Df\\
        &\qquad\qquad=\sum_{k=1}^\infty\int_{\Omega_k}(\eta_{\e_k}*w-w)\zeta_k\,d\Df+\e.
    \end{align*}
    Now, since $w\in A_1$, Lemma \ref{lem:mollificationbound} implies that
    \[
        |\eta_{\e_k}*w-w|\leq ([w]_{A_1}+1)w,
    \]
    and so for all $k\in\N$ and $\e>0$,
    \[
        \int_{\Omega_k}([w]_{A_1}+1)w\,d\Df\leq([w]_{A_1}+1)\Dfw(\Omega_k)\leq([w]_{A_1}+1)\Dfw(\Omega)<\infty.
    \]
    Moreover, since each point in $\Omega$ belongs to at most four of the $\Omega_k$, we have
    \[
        \sum_{k=1}^\infty\left|([w]_{A_1}+1)\Dfw(\Omega_k)\right|\leq 4([w]_{A_1}+1)\Dfw(\Omega)<\infty,
    \]
    Thus, applying the Dominated Convergence Theorem twice yields that
    \[
        \limsup_{\e\to0}\left(\sum_{k=1}^\infty\int_{\Omega_k}(\eta_{\e_k}*w-w)\zeta_k\,d\Df+\e\right)=\sum_{k=1}^\infty\int_{\Omega_k}\limsup_{\e\to0}(\eta_{\e_k}*w-w)\zeta_k\,d\Df.
    \]
    Thus,
    \begin{align*}
        &\limsup_{\e\to0}\Dfew(\Omega)-\Dfw(\Omega)\\
        &\qquad\qquad\leq\sum_{k=1}^\infty\int_{\Omega_k}\limsup_{\e\to0}(\eta_{\e_k}*w-w)\zeta_k\,d\Df\\
        &\qquad\qquad=\sum_{k=1}^\infty\int_{\Omega_k}\limsup_{\e\to0}\left(\int_{B(x,\e_k)} \eta_{\e_k}(x-y)w(y)\,dy-w(x)\right)\zeta_k(x)\,d\Df(x)\\
        &\qquad\qquad=\sum_{k=1}^\infty\int_{\Omega_k}\limsup_{\e\to0}\left(\int_{B(x,\e_k)} \eta_{\e_k}(x-y)(w(y)-w(x))\,dy\right)\zeta_k(x)\,d\Df(x)\\
        &\qquad\qquad\lesssim\sum_{k=1}^\infty\int_{\Omega_k}\limsup_{\e\to0}\left(\fint_{B(x,\e_k)}|w(y)-w(x)|\,dy\right)\zeta_k(x)\,d\Df(x)\\
        &\qquad\qquad=0,
    \end{align*}
    where in the last equality we used the approximability condition (\ref{eqn:wLebesguepts}) and the fact that $\e_k\to 0$ as $\e\to 0$.
    
    On the other hand, it follows from Theorem \ref{thm:EG5.2analogue} that
    \[
        \Dfw(\Omega)\leq\liminf_{\e\to0}\Dfew(\Omega).
    \]
    This completes the proof.
\end{proof}

\begin{proof}[Proof of Theorem \ref{thm:EG5.3analogue}(ii)]
    The proof of Theorem \ref{thm:EG5.3analogue}(ii) works almost verbatim from the proof of \cite[Theorem 5.3]{EG} with only a few small modifications, which we will make note of here.

    First, we modify \cite[Equation $(\star\star)$, p. 200]{EG} to instead choose $\e_k>0$ for each $k\in\N$ such that
    \begin{equation}
    \label{eqn:EG5.3star2}
        \begin{cases}
            \textrm{spt}(\eta_{\e_k}*(f\zeta_k))\subseteq V_k\\
            \int_\Omega |\eta_{\e_k}*(f\zeta_k)-f\zeta_k|\,w\,dx<\frac{\e}{2^k}\\
            \int_\Omega |\eta_{\e_k}*(fD\zeta_k)-fD\zeta_k|\,w\,dx<\frac{\e}{2^k}.
        \end{cases}
    \end{equation}
    Then, one can show that
    \[
        \Dfw(\Omega)\leq\liminf_{\e\to0}\Dfew(\Omega)
    \]
    analogously to the method in \cite{EG}.

    Moreover, for any $\varphi\in\Lipc(\Omega;\R^n)$ with $|\varphi|\leq w$, we can perform a computation that follows \cite{EG} verbatim to see that
    \begin{align*}
        \int_\Omega f_\e\div\varphi\,dx&=\int_\Omega f\div(\zeta_1(\eta_{\e_1}*\varphi))\,dx+\sum_{k=2}^\infty\int_\Omega f\div(\zeta_k(\eta_{\e_k}*\varphi))\,dx\\
        &\qquad\qquad-\sum_{k=1}^\infty\int_\Omega\varphi\cdot(\eta_{\e_k}*(fD\zeta_k)-fD\zeta_k)\,dx=:\textrm{I}_\e+\textrm{I\!I}_\e+\textrm{I\!I\!I}_\e.
    \end{align*}
    Note that by Lemma \ref{lem:mollificationbound},
    \begin{align*}
        \eta_{\e_k}*\varphi(x)&\leq\eta_{\e_k}*w(x)\leq [w]_{A_1}w(x).
    \end{align*}
    Hence, for all $k\in\N$,
    \[
        |\zeta_k(\eta_{\e_k}*\varphi)|\leq [w]_{A_1}w.
    \]
    Thus,
    \[
        |\textrm{I}_\e|=\left|\int_\Omega f\div(\zeta_1(\eta_{\e_1}*\varphi))\,dx\right|\leq [w]_{A_1}\Dfw(\Omega).
    \]
    Also, note that each point in $\Omega$ belongs to at most three of the sets $\{V_k\}_{k=1}^\infty$. Thus,
    \begin{align*}
        |\textrm{I\!I}_\e|&\leq\sum_{k=2}^\infty\left|\int_\Omega f\div(\zeta_k(\eta_{\e_k}*\varphi))\,dx\right|\leq\sum_{k=2}^\infty [w]_{A_1}\Dfw(V_k)\leq3[w]_{A_1}\Dfw(\Omega\setminus\Omega_1)<3[w]_{A_1}\e.
    \end{align*}
    For the third term, (\ref{eqn:EG5.3star2}) implies that
    \[
        |\textrm{I\!I\!I}_\e|\leq\sum_{k=1}^\infty\int_\Omega |\eta_{\e_k}*(fD\zeta_k)-fD\zeta_k|\,w\,dx<\e.
    \]
    Hence,
    \[
        \Dfew(\Omega)\leq [w]_{A_1}\Dfw(\Omega)+3[w]_{A_1}\e+\e<\infty,
    \]
    so $f_\e\in BV(\Omega;w)$. Moreover,
    \[
        \limsup_{\e\to0}\Dfew(\Omega)\leq [w]_{A_1}\Dfw(\Omega).
    \]
    Thus, up to a subsequence, we have that
    \[
        \Dfw(\Omega)\leq\lim_{\e\to0}\Dfew(\Omega)\leq [w]_{A_1}\Dfw(\Omega).
    \]
\end{proof}

%%%%%%%%%%%%%%%%%%%%%%%%%%%%%%%%%%%%%%%%%%%%%%%%
%%%%%%%%%%%%%%%%%%%%%%%%%%%%%%%%%%%%%%%%%%%%%%%%
%%%%%%%%%%%%%%%%%%%%%%%%%%%%%%%%%%%%%%%%%%%%%%%%
%%%%%%%%%%%%%%%%%%%%%%%%%%%%%%%%%%%%%%%%%%%%%%%%
%%%%%%%%%%%%%%%%%%%%%%%%%%%%%%%%%%%%%%%%%%%%%%%%
%%%%%%%%%%%%%%%%%%%%%%%%%%%%%%%%%%%%%%%%%%%%%%%%

%\newpage

\section{Weighted Isoperimetric Inequalities}
\label{sec:isoperimetricinequality}

In this section, we prove Theorem \ref{thm:GNSforweightedBV} and Corollary \ref{cor:globalweightedisoperimetric}. To do this, we make use of the following result due to P\'erez and Rela \cite{PR19}.

\begin{theorem}[Gagliardo-Nirenberg-Sobolev Inequality for $W^{1,1}(\R^n;\mu)$]
\label{thm:GNSforweightedsobolev}
    Let $\mu$ be a locally finite Borel measure for which $M\mu<\infty$ a.e.\footnote{A characterization of such measures $\mu$ can be found in Appendix \ref{sec:maximalfunctionappendix}.} Then, there exists a constant $C_1>0$ such that for all $f\in W^{1,1}(\R^n;\mu)$,
    \[
        \lVert f\rVert_{L^{1^*}(\R^n;\mu)}\leq C_1\lVert Df\rVert_{L^1(\R^n;(M\mu)^{1/1^*})},
    \]
    where $1^*=n/(n-1)$.
\end{theorem}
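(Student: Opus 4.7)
My plan is to combine a classical pointwise representation of Sobolev functions via the Riesz potential with a two-weight inequality of P\'erez--Rela type. By standard truncation and mollification arguments (using that $\mu$ is locally finite and $M\mu<\infty$ a.e.\ to ensure both sides of the inequality pass to the limit), I would first reduce to proving the estimate for $f\in C_c^\infty(\R^n)$. For such $f$, the fundamental theorem of calculus applied along rays emanating from a fixed point followed by an integration over the unit sphere yields the pointwise bound
\[
|f(x)| \leq c_n \int_{\R^n} \frac{|\nabla f(y)|}{|x-y|^{n-1}}\,dy = c_n\,I_1(|\nabla f|)(x),
\]
where $I_1$ is the Riesz potential of order one.

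The theorem thereby reduces to establishing the two-weight estimate
\[
\left(\int_{\R^n}(I_1 g)^{1^*}\,d\mu\right)^{1/1^*} \leq C\int_{\R^n} g\,(M\mu)^{1/1^*}\,dx,\qquad g\geq 0,
\]
which is then applied with $g=|\nabla f|$. By the duality $L^{1^*}(\mu)$--$L^{(1^*)'}(\mu)$ together with a Fubini argument exchanging the variables in $\int (I_1 g)\,h\,d\mu = \int g\,I_1(h\,d\mu)\,dx$, this is in turn equivalent to the pointwise bound
\[
I_1(h\,d\mu)(y)\leq C\,(M\mu(y))^{1/1^*}\,\|h\|_{L^{(1^*)'}(\mu)}\qquad\text{a.e. }y,
\]
for nonnegative $h$. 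To prove this, I would split $I_1(h\,d\mu)(y)$ into the dyadic annular pieces $|x-y|\sim 2^k$, apply H\"older's inequality with exponents $(1^*)'$ and $1^*$ on each annulus, and bound the resulting $\mu(B(y,2^{k+1}))$ factor by $|B(y,2^{k+1})|\,M\mu(y)$.

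The main obstacle is the convergence of the resulting dyadic sum. Since $n/1^*=n-1$, a naive term-by-term H\"older estimate produces a geometric series of ratio exactly one and therefore diverges; this is a genuine subtlety that a crude Hedberg-style local/global splitting cannot remedy. The resolution is to exploit reverse H\"older self-improvement for $(M\mu)^{1/1^*}$---by the Coifman--Rochberg theorem, $(M\mu)^\delta\in A_1$ for $\delta\in(0,1)$---which yields the extra decay needed on each annulus to make the sum converge. This is the key quantitative input isolated in \cite{PR19}, and once it is in hand the Sobolev inequality follows by assembling the density reduction, the Riesz potential pointwise bound, and the two-weight estimate, producing a constant $C_1$ depending only on $n$.
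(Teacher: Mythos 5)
The paper does not prove this theorem; it is imported verbatim from P\'erez and Rela \cite{PR19} (see the opening of Section \ref{sec:isoperimetricinequality}), so there is no in-paper proof to compare against. Your proposal has a genuine gap at its core. The reduction to the two-weight strong-type estimate
\[
\Bigl(\int_{\R^n}(I_1 g)^{1^*}\,d\mu\Bigr)^{1/1^*}\le C\int_{\R^n}g\,(M\mu)^{1/1^*}\,dx,\qquad g\ge0,
\]
and the further duality reduction to the pointwise bound $I_1(h\,d\mu)(y)\le C\,(M\mu(y))^{1/1^*}\,\|h\|_{L^{(1^*)'}(\mu)}$ are both too strong: they are already false for $\mu$ equal to Lebesgue measure, for which $M\mu\equiv1$ and every hypothesis of the theorem holds. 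In that case the first becomes boundedness of $I_1\colon L^1(\R^n)\to L^{n/(n-1)}(\R^n)$, which fails (a near point mass $g$ gives $I_1 g(x)\approx|x|^{-(n-1)}\notin L^{n/(n-1)}$; only weak type holds at this endpoint), and the second becomes boundedness of $I_1\colon L^n(\R^n)\to L^\infty(\R^n)$, another false endpoint. You correctly noticed that the dyadic annular sum has ratio exactly one and diverges, but the Coifman--Rochberg / reverse H\"older fix cannot rescue it: the failure already occurs when $(M\mu)^{1/1^*}\equiv1$, as regular a weight as there is. The divergence is not a technical loss in the annular splitting; it reflects the genuine failure of the strong-type Riesz potential estimate.

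What makes the theorem true anyway is that $g=|\nabla f|$ carries structure a generic nonnegative $L^1$ function lacks. Arguments that succeed at this endpoint either prove a weak-type estimate first and then upgrade by Maz'ya truncation (apply the weak estimate to $f_k=\min(\max(|f|-2^k,0),2^k)$ and sum over $k$, exploiting that $|\nabla f_k|$ is supported on the level band $\{2^k<|f|\le 2^{k+1}\}$), or work directly with level sets and a weighted isoperimetric/coarea bound; \cite{PR19} proceeds by self-improvement of $(1,1)$ Poincar\'e-type inequalities. In every case the gradient or level-set structure is used precisely to sidestep the strong-type operator estimate for $I_1$, which is exactly where your chain of reductions breaks.
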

In particular, note that $d\mu=w\,dx$, where $w\in A_1$, satisfies the hypotheses of Theorem \ref{thm:GNSforweightedsobolev}. Because of the exponents in this inequality, the following lemmas will also be relevant.

\begin{lemma}
\label{lem:deltaapproximable}
    Let $w\in A_1^*$ and $f\in BV(\Omega;w)$. If $f$ is $w$-approximable, then $f$ is $w^\delta$-approximable for all $0<\delta<1$.
\end{lemma}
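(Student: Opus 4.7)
The plan is to transfer the Lebesgue-point condition from $w$ to $w^\delta$ by combining two elementary inequalities: subadditivity of the power function and Jensen's inequality for the concave function $t \mapsto t^\delta$.

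First, I would record the pointwise inequality
\[
    |a^\delta - b^\delta| \leq |a-b|^\delta \qquad \text{for all } a,b \in [0,\infty], \; \delta \in (0,1],
\]
which follows from the standard subadditivity $(u+v)^\delta \leq u^\delta + v^\delta$ (set $u = |a-b|$, $v = \min(a,b)$). Next, fix a point $x$ in the full $\Df$-measure set on which the $w$-approximability condition holds and assume momentarily that $w(x) < \infty$. Applying the inequality above with $a = w(y)$ and $b = w(x)$ yields $|w(y)^\delta - w(x)^\delta| \leq |w(y)-w(x)|^\delta$ pointwise in $y$. Averaging over $B(x,\e)$ and then invoking Jensen's inequality for the concave function $t \mapsto t^\delta$ on $[0,\infty)$ gives
\[
    \fint_{B(x,\e)} |w(y)^\delta - w(x)^\delta|\,dy \;\leq\; \fint_{B(x,\e)} |w(y)-w(x)|^\delta\,dy \;\leq\; \left(\fint_{B(x,\e)} |w(y)-w(x)|\,dy\right)^{\!\delta}.
\]
Letting $\e \to 0^+$, the right-hand side tends to $0$ by the $w$-approximability of $f$, so the Lebesgue-point condition for $w^\delta$ holds at $x$.

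It remains to address the (possibly empty) set where $w(x) = \infty$. If $x$ is a $w$-approximability point with $w(x) = \infty$, then $\fint_{B(x,\e)} |w(y) - w(x)|\,dy \to 0$ forces $w(y) = \infty$ for a.e.\ $y \in B(x,\e)$ once $\e$ is small enough, whence $w(y)^\delta = \infty$ a.e.\ on that ball as well; the $w^\delta$-approximability condition then holds trivially at $x$.

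I do not anticipate any serious obstacle here: the proof is essentially a three-line application of subadditivity and Jensen. The only mild bookkeeping is the $\infty$-valued case handled above, and the observation that the $w^\delta$-approximability condition depends on $f$ only through the measure $\Df$, which does not change when the weight is replaced by $w^\delta$, so no additional hypotheses (such as $f \in BV(\Omega; w^\delta)$) need to be verified to apply the definition.
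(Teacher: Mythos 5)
Your proof is correct, but it takes a genuinely different route from the paper's. The paper factors out $w(x)$ and uses the scalar inequality $|t^\delta - 1| \leq |t-1|$ (for $t \geq 0$, $0<\delta<1$) to obtain the bound
\[
|w^\delta(y)-w^\delta(x)| \leq \frac{w^\delta(x)}{w(x)}\,|w(y)-w(x)|,
\]
which is linear in $|w(y)-w(x)|$; the limit then passes trivially through the average, with no need for Jensen. Your route uses the subadditivity inequality $|a^\delta-b^\delta|\leq|a-b|^\delta$, and since the resulting integrand is no longer linear in $|w(y)-w(x)|$, you bring in Jensen's inequality (concavity of $t\mapsto t^\delta$) to compare averages. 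Both are short and elementary; the paper's version produces a pointwise Lipschitz-type estimate with a constant depending on $w(x)$, while yours yields a H\"older-type estimate with a universal constant, at the cost of an extra concavity step. One minor point: your final paragraph on the case $w(x)=\infty$ is unnecessary and slightly circular as phrased — the claim that $\fint_{B(x,\e)}|w(y)-w(x)|\,dy\to 0$ "forces" $w(y)=\infty$ a.e.\ presumes the convention $\infty-\infty=0$, which is exactly what makes that integrand ambiguous. The cleaner observation, implicit in the paper's remark that $0<w(x)<\infty$, is that $w\in A_1^*$ implies $w\in L^1_{\mathrm{loc}}$, so $w<\infty$ a.e.; hence if $w(x)=\infty$ the integrand is $+\infty$ a.e.\ and the average cannot tend to $0$, so the case is simply vacuous.
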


\begin{proof}
    Let $0<\delta<1$, and suppose $f$ is $w$-approximable. Fix $x\in\Omega$ so that the $w$-approximability condition (\ref{eqn:wLebesguepts}) holds. Note that, in particular, this implies that $0<w(x)<\infty$. Then, note that
    \[
        \left|w^\delta(y)-w^\delta(x)\right|=w^\delta(x)\left|\left(\frac{w(y)}{w(x)}\right)^\delta-1\right|\leq w^\delta(x)\left|\frac{w(y)}{w(x)}-1\right|=\frac{w^\delta(x)}{w(x)}|w(y)-w(x)|.
    \]
    Thus, for $\Df$-a.e. $x$,
    \begin{align*}
        \lim_{\e\to0}\fint_{B(x,\e)}|w^\delta(y)-w^\delta(x)|\,dy&\leq\frac{w^\delta(x)}{w(x)}\lim_{\e\to0}\fint_{B(x,\e)}|w(y)-w(x)|\,dy=0.
    \end{align*}
\end{proof}

\begin{lemma}
\label{lem:deltaBVcontainment}
    Let $w:\R^n\to(0,\infty]$ be lower semicontinuous, and $0<\delta<1$. Then, $BV(\Omega;w)\subseteq\BVloc(\Omega;w^\delta)$.
\end{lemma}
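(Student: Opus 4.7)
The plan is to reduce the containment to a pointwise estimate on the unweighted variation measure, using Theorem \ref{thm:varwmeasure} as the translation device between weighted and unweighted variation measures.

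First, suppose $f \in BV(\Omega;w)$. By Theorem \ref{thm:varwmeasure}(i), this is equivalent to $f \in BV_{\mathrm{loc}}(\Omega)$ together with $w \in L^1(\Omega; d\Df)$, and moreover $\Dfw(\Omega) = \int_\Omega w\,d\Df$. To conclude $f \in \BVloc(\Omega; w^\delta)$, by Theorem \ref{thm:varwmeasure}(ii) I only need to verify that $w^\delta \in L^1_{\mathrm{loc}}(\Omega; d\Df)$, since membership in $\BVloc(\Omega)$ is already in hand.

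Fix an arbitrary $V \Subset \Omega$. Since $f \in BV_{\mathrm{loc}}(\Omega)$, we have $\Df(V) < \infty$. The main step is then a single application of Hölder's inequality with conjugate exponents $1/\delta$ and $1/(1-\delta)$ against the finite measure $\Df \res V$:
\[
    \int_V w^\delta \, d\Df \;\leq\; \left(\int_V w \, d\Df\right)^{\delta}\Df(V)^{1-\delta}.
\]
The first factor is bounded by $\left(\int_\Omega w\,d\Df\right)^\delta = \Dfw(\Omega)^\delta < \infty$, and the second factor is finite by local bounded variation. Therefore $\int_V w^\delta \, d\Df < \infty$ for every $V \Subset \Omega$, which gives $w^\delta \in L^1_{\mathrm{loc}}(\Omega; d\Df)$ and completes the proof via Theorem \ref{thm:varwmeasure}(ii).

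There is no real obstacle here; the argument is essentially a bookkeeping exercise once Theorem \ref{thm:varwmeasure} is invoked to convert the weighted variation statement into an integrability statement for the weight against the unweighted measure $\Df$. The only mild point worth flagging is why we must pass to $\BVloc$ rather than $BV$: Hölder's inequality requires the controlling measure to be finite, so we need to localize to $V \Subset \Omega$ to use $\Df(V) < \infty$. Without an assumption like $\Df(\Omega) < \infty$ (i.e.\ $f \in BV_{\mathrm{loc}}(\Omega) \cap BV(\Omega;w)$ with the full unweighted variation finite), one cannot in general upgrade the conclusion to $BV(\Omega; w^\delta)$.
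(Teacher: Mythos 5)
Your proof is correct, and the overall strategy coincides with the paper's: both invoke Theorem \ref{thm:varwmeasure} to reduce the containment to the single claim that $w^\delta \in L^1_{\mathrm{loc}}(\Omega;d\Df)$, then conclude via part (ii) of that theorem. The difference is in the estimate on a fixed $V\Subset\Omega$. You apply H\"older's inequality with exponents $1/\delta$ and $1/(1-\delta)$, using that $\Df(V)<\infty$, to obtain $\int_V w^\delta\,d\Df \leq \left(\int_V w\,d\Df\right)^\delta\,\Df(V)^{1-\delta}<\infty$. The paper instead uses that $w$ is positive and lower semicontinuous, so that $c_V:=\inf_V w>0$, and derives the pointwise bound $w^\delta = c_V^\delta(w/c_V)^\delta\leq c_V^{\delta-1}w$ on $V$ (since $w/c_V\geq 1$), giving $\int_V w^\delta\,d\Df\leq c_V^{\delta-1}\int_V w\,d\Df<\infty$. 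Both are equally short. Your H\"older argument relies on $\Df(V)<\infty$ but does not need $w$ bounded away from zero on $V$; the paper's pointwise argument needs the positive lower bound on $w$ but never uses finiteness of $\Df(V)$. In the present setting both ingredients are available, so the two routes are interchangeable, and your closing remark that the conclusion is inherently local is accurate for either argument: localization is what gives $\Df(V)<\infty$ (your route) or $\inf_V w>0$ (the paper's).
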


\begin{proof}
    Let $f\in BV(\Omega;w)$, $V\Subset\Omega$, and set
    \[
        c_V:=\inf_{x\in V}w(x)>0.
    \]
    Then,
    \[
        w^\delta=c_V^\delta\left(\frac{w}{c_V}\right)^\delta\leq c^\delta_V\frac{w}{c_V}=c_V^{\delta-1}w,
    \]
    where we used the fact that $w/c_V\geq1$. Thus,
    \[
        \int_V w^\delta\,d\Df\leq c_V^{\delta-1}\int_V w\,d\Df<\infty,
    \]
    where we used the fact that $w\in L^1(\Omega;d\Df)$ from Theorem \ref{thm:varwmeasure}(i). Since $V\Subset\Omega$ was arbitrary, this implies that $w^\delta\in L^1_\text{loc}(\Omega;d\Df).$ With this fact in hand, and noting that $f\in BV(\Omega;w)\subseteq\BVloc(\Omega)$ by Lemma \ref{lem:BVwcontainment}(i), Theorem \ref{thm:varwmeasure}(ii) implies that $f\in\BVloc(\Omega;w^\delta)$. This shows the desired containment.
\end{proof}

\begin{lemma}[Minor Modification of Theorem \ref{thm:EG5.3analogue}]
\label{lem:EG5.3deltaanalogue}
    Let $w\in A_1^*$, $f\in BV(\Omega;w)$, and $0<\delta<1$.
    \begin{enumerate}[(i)]
        \item If $f$ is $w^\delta$-approximable, then there exists a sequence $\{f_k\}_{k=1}^\infty\subseteq\BVloc(\Omega;w^\delta)\cap C^\infty(\Omega)$ such that $f_k\to f$ in $L^1(\Omega;w)$ and
        \begin{equation}
        \label{eqn:wdeltalemma1}
            \limsup_{k\to\infty}\Dfk_{w^\delta}(\Omega)\leq\Df_{w^\delta}(\Omega).
        \end{equation}
        \item If $f$ is not $w^\delta$-approximable, then there exists a sequence $\{f_k\}_{k=1}^\infty\subseteq\BVloc(\Omega;w^\delta)\cap C^\infty(\Omega)$ such that $f_k\to f$ in $L^1(\Omega;w)$ and
        \begin{equation}
        \label{eqn:wdeltalemma2}
            \limsup_{k\to\infty}\Dfk_{w^\delta}(\Omega)\leq [w]_{A_1}^\delta\Df_{w^\delta}(\Omega).
        \end{equation}
    \end{enumerate}
\end{lemma}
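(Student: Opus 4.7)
The plan is to observe that this lemma is a direct modification of Theorem \ref{thm:EG5.3analogue}, where the variational quantities are measured against $w^\delta$ while the $L^1$-convergence of the approximating sequence is retained against $w$. Two preliminary facts make this transition painless. First, $w^\delta$ inherits an $A_1$ bound from $w$: by Jensen's inequality,
\[
\fint_B w^\delta\,dx \le \Bigl(\fint_B w\,dx\Bigr)^\delta \le [w]_{A_1}^\delta \inf_{B} w^\delta,
\]
so $w^\delta\in A_1^*$ with $[w^\delta]_{A_1}\le [w]_{A_1}^\delta$. In particular, Lemma \ref{lem:mollificationbound} applies to $w^\delta$ and gives $\eta_\e \ast w^\delta \le [w]_{A_1}^\delta\, w^\delta$. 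Second, Lemma \ref{lem:deltaBVcontainment} shows $f\in\BVloc(\Omega;w^\delta)$, so $\Df_{w^\delta}(\Omega)$ is a sensible quantity.

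For part (i), I would mimic the proof of Theorem \ref{thm:EG5.3analogue}(i) with the same locally finite cover $\{\Omega_k\}$, partition of unity $\{\zeta_k\}$, and mollification parameters $\e_k$, but replace the mollifier-approximation thresholds by
\[
\int_\Omega|(f\zeta_k)\ast\eta_{\e_k}-f\zeta_k|\,w\,dx<2^{-k}\e \quad\text{and}\quad \int_\Omega|(fD\zeta_k)\ast\eta_{\e_k}-fD\zeta_k|\,w^\delta\,dx<2^{-k}\e.
\]
The first condition (in $L^1(\Omega;w)$) delivers the required convergence $f_\e\to f$ in $L^1(\Omega;w)$; the second feeds into the error in the $w^\delta$-variation computation. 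The remainder of the argument runs essentially verbatim, but with every appearance of $w$ in a variational integral replaced by $w^\delta$: the Fubini--Tonelli computation produces the term $\sum_k\int_{\Omega_k}(\eta_{\e_k}\ast w^\delta-w^\delta)\zeta_k\,d\Df$, which is dominated by a multiple of $\sum_k\Df_{w^\delta}(\Omega_k)\lesssim \Df_{w^\delta}(\Omega)$ by bounded multiplicity of the cover, thus allowing dominated convergence, and whose limsup vanishes by the $w^\delta$-approximability of $f$. This yields \eqref{eqn:wdeltalemma1}.

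For part (ii), I would run the proof of Theorem \ref{thm:EG5.3analogue}(ii) in exactly the same fashion, using the sets $V_k$ with controlled multiplicity and selecting $\e_k$ to control $L^1(\Omega;w)$-closeness (for $f\zeta_k$) and $L^1(\Omega;w^\delta)$-closeness (for $fD\zeta_k$). The absence of the approximability assumption is now absorbed by the global mollification bound $\eta_{\e_k}\ast w^\delta\le[w]_{A_1}^\delta\, w^\delta$, which places the constant $[w]_{A_1}^\delta$ in front of $\Df_{w^\delta}(\Omega)$ in the final limsup, giving \eqref{eqn:wdeltalemma2}.

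The only real obstacle is bookkeeping: ensuring the $\e_k$ can be chosen so that the $L^1(\Omega;w)$-convergence conditions and the $L^1(\Omega;w^\delta)$-closeness for $fD\zeta_k$ hold simultaneously. This is not a true obstacle, since approximate identities converge in $L^1$ of either of the $A_1$ measures $w\,dx$ and $w^\delta\,dx$, so each $\e_k$ may simply be taken as the minimum of the relevant finitely many thresholds.
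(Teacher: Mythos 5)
Your proposal is correct and follows essentially the same route as the paper: both rerun the proof of Theorem \ref{thm:EG5.3analogue} with the variational weight replaced by $w^\delta$, using Jensen's inequality to get $[w^\delta]_{A_1}\le[w]_{A_1}^\delta$, Lemma \ref{lem:deltaBVcontainment} for membership in $\BVloc(\Omega;w^\delta)$, and a modified choice of $\e_k$ enforcing $L^1(\Omega;w^\delta)$-closeness of $\eta_{\e_k}\ast(fD\zeta_k)$. The one point the paper handles that you omit is the case $\Df_{w^\delta}(\Omega)=\infty$, where your dominated-convergence step would break down; but there the asserted bounds are vacuous, so the sequence from Theorem \ref{thm:EG5.3analogue} itself suffices.
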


\begin{proof}
    From Lemma \ref{lem:deltaBVcontainment}, we have that $f\in\BVloc(\Omega;w^\delta)$ and $f\in L^1_\textrm{loc}(\Omega;w^\delta)$. Now, we split into two cases.

    First, consider the case when $\Df_{w^\delta}(\Omega)=\infty$. If this happens, then we may choose the exact same sequence as in Theorem \ref{thm:EG5.3analogue}(i) or Theorem \ref{thm:EG5.3analogue}(ii), respectively, since the inequality (\ref{eqn:wdeltalemma1}) or (\ref{eqn:wdeltalemma2}), respectively, trivially holds.

    Otherwise, we assume that $\Df_{w^\delta}(\Omega)<\infty$. Then, we copy the proof of Theorem \ref{thm:EG5.3analogue}(i) or Theorem \ref{thm:EG5.3analogue}(ii), respectively, with the following modification. Namely, when we choose $\e_k$, we specify that
    \[
        \int_\Omega|\eta_{\e_k}*(fD\zeta_k)-fD\zeta_k)|\,w^\delta\,dx<\frac{\e}{2^k}.
    \]
    This is justified because we have that $f\in BV(\Omega;w)\subseteq BV_\textrm{loc}(\Omega;w^\delta)\subseteq L^1_\textrm{loc}(\Omega;w^\delta)$ by Lemma \ref{lem:deltaBVcontainment} and $D\zeta_k\in C_c^\infty(\Omega)$, so $f D\zeta_k\in L^1(\Omega;w^\delta)$, so the convolution converges in $L^1(\Omega;w^\delta)$. Then, we continue following the argument from Theorem \ref{thm:EG5.3analogue}, replacing $w$ by $w^\delta$ when necessary, to complete the proof.

        Note here that we use the fact that $[w^\delta]_{A_1}\leq[w]_{A_1}^\delta$. Indeed,
        \[
            \fint_B w^\delta\,dx\leq\left(\fint_B w\,dx\right)^\delta\leq\left([w]_{A_1}\inf_{x\in B}w(x)\right)^\delta=[w]_{A_1}^\delta\inf_{x\in B}w^\delta(x).
        \]
     %   \joseph{Is it true that $[w^\delta]_{A_1}=[w]_{A_1}^\delta$?}
    
\end{proof}

With these facts in hand, we can prove Theorem \ref{thm:GNSforweightedBV}, a Gagliardo-Nirenberg-Sobolev inequality for $BV(\R^n;w)$.

\begin{proof}[Proof of Theorem \ref{thm:GNSforweightedBV}]
    Choose a sequence of functions $\{f_k\}_{k=1}^\infty\subseteq C_c^\infty(\R^n)$ such that
    \[
        f_k\to f\textrm{ in }L^1(\Omega;w),\quad f_k\to f\,\,\mathcal{L}^n\textrm{-a.e.},\quad\limsup_{k\to\infty}\Dfk_{w^{1/1^*}}\leq [w]_{A_1}^{1/1*}\Df_{w^{1/1^*}}.
    \]
    Such functions exist according to Lemma \ref{lem:EG5.3deltaanalogue}. The compact support can be obtained by multiplying by smooth cutoff functions with ascending supports. The pointwise a.e. convergence can be assured by taking a subsequence if necessary.

    Now, Fatou's Lemma and Theorem \ref{thm:GNSforweightedsobolev} imply that
    \begin{align*}
        \lVert f\rVert_{L^{1^*}(\R^n;w)}&\leq\liminf_{k\to\infty}\lVert f_k\rVert_{L^{1^*}(\R^n;w)}\\
        &\leq C_1\limsup_{k\to\infty}\Dfk_{L^1(\R^n;(Mw)^{1/1^*})}\\
        &\leq C_1[w]_{A_1}^{1/1^*}\limsup_{k\to\infty}\Dfk_{L^1(\R^n;w^{1/1^*})}\\
        &\leq C_1[w]_{A_1}^{2/1^*}\Df_{w^{1/1^*}}(\R^n).
    \end{align*}

    If, in addition, $f$ is $w^{1/1^*}$-approximable, then according to Lemma \ref{lem:EG5.3deltaanalogue}, we may assume that
    \[
        \limsup_{k\to\infty}\Dfk_{w^{1/1^*}}\leq\Df_{w^{1/1^*}}.
    \]
    Then, the chain of inequalities becomes
    \begin{align*}
        \lVert f\rVert_{L^{1^*}(\R^n;w)}&\leq C_1[w]_{A_1}^{1/1^*}\limsup_{k\to\infty}\Dfk_{L^1(\R^n;w^{1/1^*})}\\
        &\leq C_1[w]_{A_1}^{1/1^*}\Df_{w^{1/1^*}}(\R^n).
    \end{align*}
    This completes the proof.
\end{proof}

\section{Isometrically Embedding $BV(\Omega;w)\hookrightarrow BV(\Omega_w)$}
\label{sec:embedding}

In this section, we prove Theorem \ref{thm:isometry}. To begin, we state a key definition.

\begin{definition}
\label{def:J}
    Let $\Omega\subseteq\R^n$ be an open set and $w:\mathbb{R}^n\to(0,\infty]$ be lower-semicontinuous. The \textbf{subgraph} of $w$ in $\Omega$ is given by
    \[
        \Omega_w=\{(x,y)\in\R^n\times\R:x\in\Omega,0<y<w(x)\}.
    \]
    It follows by the lower-semicontinuity of $w$ that the subgraph $\Omega_w$ is open. For $f\in L^1(\Omega;w)$, we define $Jf:\Omega_w\to\R$ by $Jf(x,y)=f(x)$.
\end{definition}

\begin{remark}
\label{rmk:AntociRemark}
    Following \cite[Section 4]{A}, we have that $J:W^{1,1}(\Omega;w)\to W^{1,1}(\Omega_w)$ is a well-defined isometric embedding. That is,
    \[
        \lVert f\rVert_{L^1(\Omega;w)}=\lVert Jf\rVert _{L^1(\Omega_w)}\qquad\textrm{and}\qquad\lVert Df\rVert_{L^1(\Omega;w)}=\lVert D(Jf)\rVert_{L^1(\Omega_w)}.
    \]
    More generally, $J:L^1(\Omega;w)\to L^1(\Omega_w)$ is a well-defined isometry.\footnote{To prove this, just use Fubini's Theorem.}
\end{remark}

We would like to extend this result to $BV(\Omega;w)$. Such a result could be a useful tool to turn problems in a weighted $BV$ space into problems in the unweighted embedding. To that end, we first present the following lemma for sets of finite $w$-perimeter.

\begin{lemma}
\label{lem:b3}
    Let $w:\mathbb{R}^n\to(0,\infty]$ be lower semicontinuous and let $\Omega\subseteq\mathbb{R}^n$ be open. If $E\subseteq\mathbb{R}^n$ has finite $w$-perimeter in $\Omega$, then $E_w=\{(x,y)\in\mathbb{R}^{n+1}:x\in E,0<y<w(x)\}$ has finite perimeter in $\Omega_w$ and 
    \[
        \lVert\partial E\rVert_w(\Omega)=\lVert\partial E_w\rVert(\Omega_w).
    \] 
\end{lemma}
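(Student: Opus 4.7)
The plan is to prove the two inequalities $\lVert\partial E_w\rVert(\Omega_w)\leq\dEw(\Omega)$ and $\dEw(\Omega)\leq\lVert\partial E_w\rVert(\Omega_w)$ separately; since $E$ has finite $w$-perimeter, the first one already forces $E_w$ to have finite perimeter in $\Omega_w$. Throughout I will use Theorem~\ref{thm:varwmeasure}(i) in the form $\dEw(\Omega)=\int_\Omega w\,d\dE$, together with the Gauss--Green identity $\int_E\div\psi\,dx=\int_\Omega\psi\cdot\nu_E\,d\dE$ for $\psi\in\Lipc(\Omega;\R^n)$, which is just Theorem~\ref{thm:EG5.1} applied to $\chi_E\in\BVloc(\Omega)$.

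For the direction $\lVert\partial E_w\rVert(\Omega_w)\leq\dEw(\Omega)$, take $\Phi=(\Phi_1,\Phi_{n+1})\in\Lipc(\Omega_w;\R^{n+1})$ with $|\Phi|\leq 1$ and split the divergence. Since $\Phi$ vanishes near $\partial\Omega_w$, the Fundamental Theorem of Calculus gives $\int_0^{w(x)}\partial_y\Phi_{n+1}(x,y)\,dy=\Phi_{n+1}(x,w(x))-\Phi_{n+1}(x,0)=0$, so the vertical part of $\int_{E_w}\div\Phi$ vanishes. For the horizontal part, Fubini rewrites $\int_{E_w}\div_x\Phi_1$ as $\int_0^\infty\int_E\div_x\Phi_1(x,y)\,dx\,dy$, and for each fixed $y>0$ the slice $\Phi_1(\cdot,y)$ belongs to $\Lipc(\{w>y\};\R^n)\subseteq\Lipc(\Omega;\R^n)$ with $|\Phi_1(\cdot,y)|\leq 1$; applying Gauss--Green to each slice and using Fubini once more yields
\[
\int_{E_w}\div\Phi=\int_\Omega g(x)\cdot\nu_E(x)\,d\dE(x),\qquad g(x):=\int_0^{w(x)}\Phi_1(x,y)\,dy.
\]
Since $|g|\leq w$, the right-hand side is bounded by $\int_\Omega w\,d\dE=\dEw(\Omega)$; taking the supremum over $\Phi$ completes this direction.

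The reverse inequality is the main obstacle, because the natural candidate $\Phi_1(x,y)=(\varphi(x)/w(x))\,\eta(y/w(x))$ fails to be Lipschitz when $w$ is only lower semicontinuous. I first handle $w$ Lipschitz: given $\varphi\in\Lipc(\Omega;\R^n)$ with $|\varphi|\leq w$, positivity and Lipschitz continuity of $w$ imply $\inf_{\supp\varphi}w>0$, so $\varphi/w$ is Lipschitz there. Choose a Lipschitz profile $\eta_\delta:[0,1]\to[0,1]$ vanishing off $[\delta,1-\delta]$ with $Z_\delta:=\int_0^1\eta_\delta\to 1$ as $\delta\to 0^+$, and set $\Phi=(\Phi_1,0)$ with
\[
\Phi_1(x,y):=\tfrac{\varphi(x)}{w(x)}\,\eta_\delta\!\bigl(\tfrac{y}{w(x)}\bigr).
\]
Then $\Phi\in\Lipc(\Omega_w;\R^{n+1})$, $|\Phi|\leq 1$, and the substitution $t=y/w(x)$ gives $g(x)=Z_\delta\,\varphi(x)$. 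The formula derived above then yields $\int_{E_w}\div\Phi=Z_\delta\int_E\div\varphi\,dx$; letting $\delta\to 0^+$ and taking the supremum over $\varphi$ gives $\dEw(\Omega)\leq\lVert\partial E_w\rVert(\Omega_w)$ when $w$ is Lipschitz.

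For general lower semicontinuous $w$, I approximate from below by the Moreau--Yosida regularizations $w_k(x):=\inf_y(w(y)+k|x-y|)$, which are $k$-Lipschitz and satisfy $0<w_k\nearrow w$ pointwise (positivity uses $w>0$ and lower semicontinuity to rule out minimizing sequences that would drive $w_k(x)$ to $0$). Since $\Omega_{w_k}\subseteq\Omega_w$ and $E_w\cap\Omega_{w_k}=E_{w_k}$, any $\Phi\in\Lipc(\Omega_{w_k};\R^{n+1})$ with $|\Phi|\leq 1$ extends by zero to a member of $\Lipc(\Omega_w;\R^{n+1})$ with $\int_{E_w}\div\Phi=\int_{E_{w_k}}\div\Phi$, so $\lVert\partial E_{w_k}\rVert(\Omega_{w_k})\leq\lVert\partial E_w\rVert(\Omega_w)$. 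Combining this with the Lipschitz case applied to $w_k$ and the monotone convergence theorem,
\[
\dEw(\Omega)=\int_\Omega w\,d\dE=\lim_{k\to\infty}\int_\Omega w_k\,d\dE=\lim_{k\to\infty}\lVert\partial E_{w_k}\rVert(\Omega_{w_k})\leq\lVert\partial E_w\rVert(\Omega_w),
\]
which finishes the proof.
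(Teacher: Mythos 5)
Your proof is correct, and it takes a genuinely different route from the paper's. The paper works geometrically: it identifies the measure-theoretic boundary $(\partial_*E_w)\cap\Omega_w$ as the subgraph of $w$ over $(\partial_*E)\cap\Omega$ via density quotients, uses the representation $\lVert\partial E\rVert=\mathcal{H}^{n-1}\mres\partial_*E$, approximates $w$ from below by simple functions $w_j=\sum_k a_{j,k}\chi_{F_{j,k}}$, applies Federer's product formula $\mathcal{H}^n(S\times(0,a))=a\,\mathcal{H}^{n-1}(S)$ on the countably rectifiable pieces, and then invokes Federer's characterization of finite perimeter (via Lahti \cite{La20}) to conclude $E_w$ has finite perimeter. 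Your argument instead stays entirely at the level of test functions: you obtain the easy inequality $\lVert\partial E_w\rVert(\Omega_w)\leq\dEw(\Omega)$ by splitting the divergence, killing the vertical term by the fundamental theorem of calculus, and applying Gauss--Green to horizontal slices $\Phi_1(\cdot,y)\in\Lipc(\Omega;\R^n)$; for the reverse inequality you build the competitor $\Phi_1(x,y)=(\varphi(x)/w(x))\eta_\delta(y/w(x))$ when $w$ is Lipschitz, and then pass to general lower semicontinuous $w$ by Moreau--Yosida regularization $w_k\nearrow w$, using monotonicity of the subgraphs $\Omega_{w_k}\subseteq\Omega_w$ and monotone convergence together with Theorem~\ref{thm:varwmeasure}. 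Your route avoids rectifiability of $\partial_*E$, the coarea-type Hausdorff measure product formula, and the appeal to Lahti's theorem, at the cost of not producing the explicit identification of $\partial_*E_w$ (which is extra structure the paper's proof gives for free). Both are sound; yours is more elementary and self-contained, while the paper's is closer in spirit to the GMT machinery it already uses elsewhere and yields the boundary identification as a byproduct.

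One small remark worth recording explicitly if you write this up: in the Lipschitz step you need $\chi_E\in BV(\Omega;w_k)$ to invoke Theorem~\ref{thm:varwmeasure} and the Lipschitz case; this follows from $w_k\leq w$, which gives both $\chi_E\in L^1(\Omega;w_k)$ and $\int_\Omega w_k\,d\dE\leq\int_\Omega w\,d\dE<\infty$.
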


\begin{remark}
    In the following proof, instead of denoting the $n$-dimensional Lebesgue measure of $E$ by $|E|$, we will denote it by $\mathcal{L}^n(E)$ to make the dimension of the ambient space obvious. Moreover, by $Q_r(x)$, we mean the cube in $\R^n$ centered at $x$ with side length $2r$, and by $Q_r(x,y)$, we mean the cube in $\R^n\times\R$ centered at $(x,y)$ with side length $2r$.
\end{remark}

\begin{proof}
    First, we claim that $(\partial_*E_w)\cap\Omega_w =\{(x,y)\in\mathbb{R}^{n+1}:x\in(\partial_*E)\cap \Omega,0<y<w(x)\}$. To that end, suppose $(x,y)\in (\partial_*E_w)\cap\Omega_w$. Then, $(x,y)\in \Omega_w$, so $0<y<w(x)$ and $x\in \Omega$. Recall that an equivalent definition for $(x,y)$ being in the measure theoretic boundary of $E_w$, namely $\partial_*E_w$, is that 
    \[
        \limsup_{r\to 0}\frac{\mathcal{L}^{n+1}(Q_r(x,y))\cap E_w)}{r^{n+1}}>0\qquad\textrm{and}\qquad\limsup_{r\to 0}\frac{\mathcal{L}^{n+1}(Q_r(x,y))\cap E_w^c)}{r^{n+1}}>0.
    \]
    Since $\Omega_w$ is open (see Definition \ref{def:J}), we have that for small enough $r$, $Q_r(x,y)\subseteq\Omega_w$. Therefore, for small enough $r$ and $(s,t)\in Q_r(x,y)$, we have that $(s,t)\in E_w$ if and only if $s\in E$. We now have that for small enough $r$, 
    \begin{align*}
        \frac{\mathcal{L}^{n+1}(Q_r(x,y)\cap E_w)}{r^{n+1}}&=\frac{1}{r^{n+1}}\int_{Q_r(x,y)}\chi_{E_w}(s,t)\,d(s,t)\\
        &=\frac{1}{r^{n+1}}\int_{Q_r(x)}\int_{y-r}^{y+r} \chi_E(s)\,dt\,ds\\
        &=\frac{2r}{r^{n+1}}\int_{Q_r(x)}\chi_E(s)\,ds\\
        &=\frac{2\mathcal{L}^n(Q_r(x)\cap E)}{r^n}.
    \end{align*}
    Similarly, for small enough $r$,
    \[
        \frac{\mathcal{L}^{n+1}(Q_r(x,y)\cap E_w^c)}{r^{n+1}}=\frac{2\mathcal{L}^n(Q_r(x)\cap E^c)}{r^n}
    \] 
    It follows that 
    \[
        \limsup_{r\to 0}\frac{\mathcal{L}^n(Q_r(x)\cap E)}{r^n}>0\qquad\textrm{and}\qquad\limsup_{r\to 0}\frac{\mathcal{L}^n(Q_r(x)\cap E^c)}{r^n}>0.
    \]
    Thus, $x\in\partial_* E$, so $(x,y)\in \{(x,y)\in\mathbb{R}^{n+1}:x\in\partial_*E,0<y<w(x)\}$. Thus, $(\partial_*E_w)\cap\Omega_w\subseteq\{(x,y)\in\mathbb{R}^{n+1}:x\in(\partial_*E)\cap \Omega,0<y<w(x)\}$. The reverse containment can be obtained analogously. This proves the claim.

    %%%%%%%%%%%%%%%%%%%%%%%%%%%%%%%%%%
    \iffalse
    Now suppose that $(x,y)\in\{(x,y)\in\mathbb{R}^{n+1}:x\in\partial_*E\cap \Omega,0<y<w(x)\}$. Since $x\in \Omega$ and $0<y<w(x)$, it follows that $(x,y)\in \Omega_w$. Since $x\in \partial_* E$, 
    \[
        \limsup_{r\to 0}\frac{\mathcal{L}^{n}(Q(x,r)\cap E)}{r^{n}}>0\qquad\textrm{and}\qquad\limsup_{r\to 0}\frac{\mathcal{L}^{n}(Q(x,r)\cap E)}{r^{n+1}}>0.
    \]
    For small enough $r$ and $(s,t)\in Q((x,y),r)$, we have that $(s,t)\in E_w$ if and only if $s\in E$. We then have that for small enough $r$, 
    \begin{align*}
        \frac{\mathcal{L}^{n}(Q(x,r)\cap E)}{r^{n}}&=\frac{2r}{2r^{n+1}}\int_{Q(x,r)}\chi_{E}(s)\,ds\\
        &=\frac{1}{2r^{n+1}}\int_{Q(x,r)}\int_{y-r}^{y+r} \chi_E(s)\,dtds\\
        &=\frac{1}{2r^{n+1}}\int_{Q(x,r)}\chi_{E_w}(s,t)\,dtds\\
        &=\frac{\mathcal{L}^{n+1}(Q((x,y),r)\cap E_w)}{2r^{n+1}}.
    \end{align*}
    Similarly, for small enough $r$,
    \[
        \frac{\mathcal{L}^{n}(Q(x,r)\cap E^c)}{r^{n}}=\frac{\mathcal{L}^{n+1}(Q((x,y),r)\cap E_w^c)}{2r^{n+1}}.
    \]
    It follows that 
    \[
        \limsup_{r\to 0}\frac{\mathcal{L}^{n+1}(Q((x,y),r)\cap E_w)}{r^{n+1}}>0\qquad\textrm{and}\qquad\limsup_{r\to 0}\frac{\mathcal{L}^{n+1}(Q((x,y),r)\cap E_w^c)}{r^{n+1}}>0.
    \]
    Therefore, $(x,y)\in\partial_*E_w$ such that $(x,y)\in (\partial_*E_w)\cap\Omega_w$. It follows that $(\partial_*E_w)\cap\Omega_w =\{(x,y)\in\mathbb{R}^{n+1}:x\in(\partial_*E)\cap \Omega,0<y<w(x)\}$.
    \fi
    %%%%%%%%%%%%%%%%%%%%%%%%%%%%%%%%%%%%%%%%%
    
    With this claim in hand, we will now obtain our result. Since $E$ has finite $w$-perimeter in $\Omega$, $E$ has locally finite perimeter in $\Omega$ by Lemma \ref{lem:BVwcontainment}. Moreover, by \cite[Theorem 5.16]{EG}, we know that $\lVert\partial E\rVert=\mathcal{H}^{n-1}\mres\partial_*E$. By these facts and Theorem \ref{thm:varwmeasure}, we have
    \[
        \lVert\partial E\rVert_w(\Omega)=\int_{(\partial_* E)\cap\Omega}w\,d\mathcal{H}^{n-1}.
    \]
    Since $w$ is lower semicontinuous, $w$ is measurable. By this and the fact that $w$ is positive, there exist an increasing sequence of functions $w_j=\sum_{k=1}^\infty a_{j,k}\chi_{F_{j,k}}$,  such that $w_j\to w$ and for all $j\in\mathbb{N}$, 
    \begin{equation}
    \label{eqn:wjbounds}
        \int_{(\partial_*E)\cap \Omega}w_j\,d\mathcal{H}^{n-1}\leq \int_{(\partial_*E)\cap \Omega} w\,d\mathcal{H}^{n-1}\leq \int_{(\partial_*E)\cap \Omega}w_j\,d\mathcal{H}^{n-1}+\frac{1}{j}.
    \end{equation}
    We can also assume that for each $j\in\mathbb{N}$, the constants $a_{j,k}$ are positive and the sets $F_{j,k}$ are disjoint and Borel. A short calculation shows that
    \[
        \int_{(\partial_*E)\cap \Omega}w_j\,d\mathcal{H}^{n-1}=\sum_{k=1}^\infty a_{j,k}\mathcal{H}^{n-1}((\partial_*E)\cap\Omega\cap F_{j,k}).
    \]
    Notice that, without loss of generality, we can assume that $E$ is Borel. Otherwise, there exists a Borel set $E'$ such that $\chi_E=\chi_{E'}$ $\mathcal{L}^n$-a.e. It follows that $\chi_{E_w}=\chi_{E'_w}$ $\mathcal{L}^{n+1}$-a.e. We trivially have that $\lVert \chi_E\rVert_{L^1(\Omega,w)}=\lVert \chi_{E'}\rVert_{L^1(\Omega,w)}$ and $\lVert \chi_{E_w}\rVert_{L^1(\Omega_w)}=\lVert \chi_{E'_w}\rVert_{L^1(\Omega_w)}$. By their definitions, both the weighted and unweighted variation measures are invariant under changes of the function on a null set. Therefore, $\lVert\partial E\rVert_w (\Omega)=\lVert\partial E'\rVert_w (\Omega)$ and $\lVert\partial E_w\rVert (\Omega_w)=\lVert\partial E'_w\rVert(\Omega_w)$. With this assumption in mind, it follows that $\partial_*E$ is Borel. By \cite[Theorem 5.15 and Lemma 5.5]{EG}, we know that $\partial_*E$ is countably $(n-1)$-rectifiable. It follows that that $(\partial_*E)\cap\Omega\cap F_{j,k}$ is countably $(n-1)$-rectifiable and Borel. Therefore, by \cite[Theorem 3.2.23]{Fed69}, we have that
    \[
        a_{j,k}\mathcal{H}^{n-1}((\partial_*E)\cap\Omega\cap F_{j,k})=\mathcal{H}^n\left(\{(x,y)\in\mathbb{R}^{n+1}:x\in(\partial_*E)\cap\Omega\cap F_{j,k},0<y<a_{j,k}\}\right).
    \]
    Since the sets $F_{j,k}$ are disjoint for each $j\in\mathbb{N}$, we have that 
    \begin{align}
    \label{eqn:wjequality}
        \int_{(\partial_*E)\cap \Omega}w_j\,d\mathcal{H}^{n-1}&=\sum_{k=1}^\infty\mathcal{H}^n\left(\{(x,y)\in\mathbb{R}^{n+1}:x\in(\partial_*E)\cap\Omega\cap F_{j,k},0<y<a_{j,k}\}\right)\nonumber\\
        &=\mathcal{H}^n\left(\bigcup_{k=1}^\infty\{(x,y)\in\mathbb{R}^{n+1}:x\in(\partial_*E)\cap\Omega\cap F_{j,k},0<y<a_{j,k}\}\right)\\
        &=\mathcal{H}^n\left(\{(x,y)\in\mathbb{R}^{n+1}:x\in(\partial_* E)\cap\Omega,0<y<w_j(x)\}\right).\nonumber
    \end{align}
    Since $w_j\nearrow w$, we have that
    \begin{align}
    \label{eqn:wjlimit}
        &\lim_{j\to\infty}\mathcal{H}^n\left(\{(x,y)\in\mathbb{R}^{n+1}:x\in(\partial_* E)\cap\Omega,0<y<w_j(x)\}\right)\nonumber\\
        &\qquad=\mathcal{H}^n\left(\bigcup_{j=1}^\infty\{(x,y)\in\mathbb{R}^{n+1}:x\in(\partial_* E)\cap\Omega,0<y<w_j(x)\}\right)\\
        &\qquad=\mathcal{H}^n\left(\{(x,y)\in\mathbb{R}^{n+1}:x\in(\partial_*E)\cap \Omega,0<y<w(x)\}\right).\nonumber
    \end{align}
    Taking $j\to\infty$ in \eqref{eqn:wjbounds}, and using \eqref{eqn:wjequality} and \eqref{eqn:wjlimit}, we obtain
    \[
        \lVert\partial E\rVert_w(\Omega)=\int_{(\partial_*E)\cap \Omega} w\,d\mathcal{H}^{n-1}=\mathcal{H}^n\left(\{(x,y)\in\mathbb{R}^{n+1}:x\in(\partial_*E)\cap \Omega,0<y<w(x)\}\right).
    \]
    Recall that $\{(x,y)\in\mathbb{R}^{n+1}:x\in(\partial_*E)\cap \Omega,0<y<w(x)\}=(\partial_*E_w)\cap \Omega_w$. Since $\lVert\partial E\rVert_w(\Omega)=\mathcal{H}^n((\partial_*E_w)\cap\Omega_w)<\infty$, we have by \cite[Theorem 1.1]{La20} that $E_w$ has finite perimeter in $\Omega_w$. By \cite[Theorem 5.16]{EG}, $\lVert\partial E_w\lVert(\Omega_w)=\mathcal{H}^n((\partial_*E_w)\cap \Omega_w)$. Therefore,
    \[
        \lVert\partial E\rVert_w(\Omega)=\lVert\partial E_w\rVert(\Omega_w).
    \]
    This completes the proof.
\end{proof}

In order to extend this result from sets of finite $w$-perimeter to all functions in $BV(\Omega;w)$, we will need the following version of a coarea formula, variations of which are well documented by Camfield in \cite{C}.

\begin{theorem}[Minor Modification of {\cite[Theorem 3.1.13]{C}}]
\label{thm:Coarea}
Let $w:\mathbb{R}^n\to (0,\infty]$, and let $\Omega\subseteq\mathbb{R}^n$ be open. If $f\in L^1_{\text{loc}}(\Omega,w)$, we define for $t\in\mathbb{R}$ the sets $E_t=\{x\in\Omega:f(x)>t\}$. Then
\[
    \Dfw(\Omega)=\int_{-\infty}^\infty \lVert\partial E_t\rVert_w(\Omega)\,dt.
\]
It particular, if $f\in BV(\Omega;w)$, then $E_t$ has finite $w$-perimeter for a.e. $t\in\mathbb{R}$.
\end{theorem}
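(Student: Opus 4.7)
The plan is to reduce the weighted identity to the classical (unweighted) Fleming--Rishel coarea formula via Theorem~\ref{thm:varwmeasure} and a Tonelli-type argument applied to the resulting Radon measure identity.

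Assuming the standing lower semicontinuity hypothesis on $w$, $w$ is locally bounded below by a positive constant, so $L^1_{\mathrm{loc}}(\Omega;w)\subseteq L^1_{\mathrm{loc}}(\Omega)$ and in particular $f\in L^1_{\mathrm{loc}}(\Omega)$. If $f\notin\BVloc(\Omega)$, then both sides of the asserted identity equal $+\infty$: for any $V\Subset\Omega$ with $\lVert Df\rVert(V)=\infty$, writing $c_V:=\inf_V w>0$, one has $\Dfw(V)\geq c_V\lVert Df\rVert(V)$ on the left, and on the right $\int\lVert\partial E_t\rVert_w(V)\,dt\geq c_V\int\lVert\partial E_t\rVert(V)\,dt=\infty$ by the classical coarea formula. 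So we may assume $f\in\BVloc(\Omega)$. Theorem~\ref{thm:varwmeasure}(ii), together with an exhaustion $V_m\nearrow\Omega$ and monotone convergence, then gives
\[
    \Dfw(\Omega)=\int_\Omega w\,d\lVert Df\rVert,\qquad\lVert\partial E_t\rVert_w(\Omega)=\int_\Omega w\,d\lVert\partial E_t\rVert.
\]

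Next, I would invoke the classical Fleming--Rishel coarea formula (e.g.\ \cite[Theorem 5.9]{EG}, localized to $\Omega$) to obtain the Radon measure identity
\[
    \lVert Df\rVert(A)=\int_{-\infty}^\infty\lVert\partial E_t\rVert(A)\,dt\qquad\text{for every Borel }A\subseteq\Omega.
\]
Writing $w$ as the pointwise increasing limit of simple Borel functions $w_j=\sum_k a_{j,k}\chi_{F_{j,k}}$ with $a_{j,k}>0$ and, for each fixed $j$, disjoint Borel sets $F_{j,k}$, I test the measure identity against each $w_j$ and apply Tonelli (legitimate since all integrands are nonnegative) to obtain
\[
    \int_\Omega w_j\,d\lVert Df\rVert=\sum_k a_{j,k}\int_{-\infty}^\infty\lVert\partial E_t\rVert(F_{j,k}\cap\Omega)\,dt=\int_{-\infty}^\infty\int_\Omega w_j\,d\lVert\partial E_t\rVert\,dt.
\]
Two applications of monotone convergence (one inside the $t$-integral, one outside) let $w_j\nearrow w$ on both sides, and combining with the two identities in the previous paragraph yields the desired coarea formula.

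The main technical point is the Tonelli step for the measure-valued integrand $\lVert Df\rVert=\int\lVert\partial E_t\rVert\,dt$: integrating $w$ against this identity is precisely what forces the simple-function approximation of $w$ together with double monotone convergence, since one cannot directly integrate a weight against a ``measure-valued measure.'' Everything else amounts to routine bookkeeping with Theorem~\ref{thm:varwmeasure} and careful treatment of the case when one (hence both) sides is infinite.
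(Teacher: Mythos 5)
The paper does not actually prove Theorem~\ref{thm:Coarea}: it is stated as a ``minor modification'' of Camfield \cite[Theorem~3.1.13]{C} and used as a black box, so there is no internal proof to compare against. Your derivation is a valid and, for this paper, quite natural alternative route: by passing through Theorem~\ref{thm:varwmeasure} you convert both $\Dfw(\Omega)$ and $\lVert\partial E_t\rVert_w(\Omega)$ into integrals of $w$ against the unweighted variation measures, which turns the weighted identity into a Tonelli/monotone-convergence consequence of the classical Fleming--Rishel formula. What this buys is that it reuses machinery the paper has already built. What it costs is generality: you assume lower semicontinuity of $w$ (it is what makes $\inf_V w>0$ on compacts and what Theorem~\ref{thm:varwmeasure} requires), whereas the theorem as stated assumes only $w:\R^n\to(0,\infty]$. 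Camfield's proof, which works directly from the layer-cake representation $f=\int_0^\infty\chi_{E_t}\,dt-\int_{-\infty}^0(1-\chi_{E_t})\,dt$ and never invokes the identification $\Dfw(\Omega)=\int_\Omega w\,d\lVert Df\rVert$, does not need semicontinuity. Since the paper only applies the theorem to lower semicontinuous weights this narrowing is harmless in context, but the hypothesis should be stated.

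Two smaller points to tighten. The case $f\notin\BVloc(\Omega)$ needs the \emph{extended} form of the unweighted coarea formula (both sides simultaneously finite or infinite for $f\in L^1_{\mathrm{loc}}$), which Evans--Gariepy do not state; you should either cite a reference that does (e.g.\ Giusti) or sketch the short approximation argument that yields it. And the identity $\lVert\partial E_t\rVert_w(\Omega)=\int_\Omega w\,d\lVert\partial E_t\rVert$ uses Theorem~\ref{thm:varwmeasure} applied to $\chi_{E_t}$, which requires $\chi_{E_t}\in\BVloc(\Omega)$; this holds only for a.e.~$t$ (take a countable exhaustion $V_m\Subset\Omega$ and use that $\int\lVert\partial E_t\rVert(V_m)\,dt<\infty$ for each $m$), and this should be said explicitly before integrating over $t$.
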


With these results in hand, we can prove Theorem \ref{thm:isometry}.

\begin{proof}[Proof of Theorem \ref{thm:isometry}]
    Fix $f\in BV(\Omega;w)$. First, note that 
    \[
        \int_\Omega |f|\,w\,dx=\int_\Omega\int_0^{w(x)} |Jf|(x,y)\,dy\,dx=\int_{\Omega_w} |Jf|(x,y)\,d(x,y).
    \]
    Therefore, $\lVert f\rVert_{L^1(\Omega,w)}=\lVert Jf\rVert_{L^1(\Omega,w)}$. We define $E_t=\{x\in\Omega:f(x)>t\}$ and $E_{t,w}=\{(x,y)\in\mathbb{R}^{n+1}:x\in E_t,0<y<w(x)\}$. It follows that $E_{t,w}=\{(x,y)\in\Omega_w:J(x,y)>t\}$. Since $w$ is positive, Theorem \ref{thm:Coarea} implies that
    \[
        \Dfw(\Omega)=\int_{-\infty}^\infty \lVert\partial E_t\lVert_w(\Omega)\,dt
    \]
    and that $E_t$ has finite $w$-perimeter for a.e. $t\in\mathbb{R}$. Furthermore, by \cite[Theorem 5.9]{EG}, we have
    \[
         \lVert D(Jf)\rVert(\Omega_w)=\int_{-\infty}^\infty \lVert\partial E_{t,w}\lVert(\Omega_w)\,dt.
    \]
    It follows by Lemma \ref{lem:b3} that 
    \begin{align*}
        \Dfw(\Omega)&=\int_{-\infty}^\infty \lVert\partial E_t\lVert_w(\Omega)\,dt\\
        &=\int_{-\infty}^\infty \lVert\partial E_{t,w}\lVert(\Omega_w)\,dt\\
        &=\lVert D(Jf)\rVert(\Omega_w).
    \end{align*}
    Then $Jf\in BV(\Omega_w)$. Finally, since 
    \[
        \lVert f\rVert_{L^1(\Omega;w)}=\lVert Jf\rVert _{L^1(\Omega_w)}\qquad\textrm{and}\qquad\Dfw(\Omega)=\lVert D(Jf)\rVert(\Omega_w),
    \]
    we have that $\lVert f\rVert_{BV(\Omega;w)}=\lVert Jf\rVert_{BV(\Omega_w)}$.
\end{proof}

\appendix 

\section{Characterization of $\mathcal{M}_F$}
\label{sec:maximalfunctionappendix}

Define the class of locally finite Borel measures for which the Hardy–Littlewood maximal function is finite almost everywhere. Let $M_{\textsf{loc}}(\R^n)$ denote the set of positive locally finite Borel measures, and set
$$\mathcal M_F=\{\mu\in M_\textrm{loc}(\R^n) : M\mu<\infty \ a.e.\}.$$

A classical result of Coifman and Rochberg \cite{CR} states that if $\mu \in \mathcal M_F$ and $0 \leq \delta < 1$, then the weight $w = (M\mu)^\delta$ belongs to $A_1$. Conversely, given any $A_1$ weight, there exists $\mu \in \mathcal M_F$ and $0 < \delta < 1$ such that $w \approx (M\mu)^\delta$ a.e. In addition, the weight $(M\mu)^\delta$ is an $A_1^*$ weight; that is, it is defined everywhere and lower semicontinuous. Thus, understanding the class $\mathcal M_F$ is fundamental for the construction of $A_1$ weights.
The class of $f\in L^1_{\textrm{loc}}(\R^n)$ for which $Mf<\infty$ a.e., has been studied by Fiorenza and Krbec \cite{FK}. We provide a complete characterization for measures in $\mathcal M_F$, with proofs that differ in from theirs.

%{\color{red} The condition should be an Ahlfors-David regularity, namely, for almost every $x\in \R^n$
%$$\mu(B(x,R))\leq C(x)R^n, \qquad R>0.$$
%Where the constant $C$ depends on $x$.}

\begin{theorem}[Characterization of $\mathcal{M}_F$]
\label{thm:question 2.4}
    Let $\mu$ be a locally finite Borel measure. Then the following are equivalent:
    \begin{enumerate}[(1)]
        \item there exists $x_0\in\mathbb{R}^N$ such that $(M\mu)(x_0)<\infty$;
        \item there exists $x_0\in\mathbb{R}^N$ such that
        \[
            \limsup_{R\to\infty}\frac{\mu(B(x_0,R))}{|B(x_0,R)|}<\infty;
        \]
        \item there exists $K>0$ such that 
        \[
            \limsup_{R\to\infty}\frac{\mu(B(x,R))}{|B(x,R)|}=K
        \]
        for all $x\in\mathbb{R}^N$;
        \item $M\mu<\infty$ a.e.
    \end{enumerate}
\end{theorem}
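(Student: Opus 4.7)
The plan is to establish the cyclic chain $(1)\Rightarrow(2)\Rightarrow(3)\Rightarrow(4)\Rightarrow(1)$, so that all four conditions are equivalent. Implications $(1)\Rightarrow(2)$ and $(4)\Rightarrow(1)$ are immediate: the former because $\mu(B(x_0,R))/|B(x_0,R)|\leq M\mu(x_0)$ for every $R>0$, and the latter because any point at which $M\mu$ is finite serves as the witness $x_0$.

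For $(2)\Rightarrow(3)$, I would exploit the simple ball inclusion $B(x,R)\subseteq B(x_0,R+d)$, where $d=|x-x_0|$. Writing $A_R(y):=\mu(B(y,R))/|B(y,R)|$, this gives
\[
A_R(x)\leq A_{R+d}(x_0)\left(\frac{R+d}{R}\right)^n,
\]
and the factor on the right tends to $1$ as $R\to\infty$. Thus $\limsup_R A_R(x)\leq \limsup_R A_R(x_0)$, and the symmetric inclusion delivers the reverse inequality. Hence the $\limsup$ takes the same common value $K$ at every $x$, and $K$ is finite by (2).

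The hardest step is $(3)\Rightarrow(4)$. The challenge is that (3) only controls $A_R(x)$ for large $R$ (via the $\limsup$), whereas $M\mu(x)$ involves the supremum over all $R>0$; the small-scale piece must be handled separately. I would fix $L>0$ and work on $B(x_0,L)$. From (3) applied at $x_0$, pick $R^{*}\geq L$ with $A_R(x_0)\leq K+1$ for all $R\geq R^{*}$. The comparison from $(2)\Rightarrow(3)$, applied uniformly in $x\in B(x_0,L)$, then gives $A_R(x)\leq (K+1)2^n$ for all such $x$ and all $R\geq R^{*}$. For the bounded range $R\leq R^{*}$, the inclusion $B(x,R)\subseteq B(x_0,L+R^{*})$ gives $A_R(x)\leq M\nu(x)$, where $\nu:=\mu\mres B(x_0,L+R^{*})$ is a finite Borel measure, so the classical weak-type $(1,1)$ inequality yields $M\nu<\infty$ almost everywhere. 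Combining these bounds,
\[
M\mu(x)\leq \max\bigl(M\nu(x),\,(K+1)2^n\bigr)<\infty \quad \text{for a.e. } x\in B(x_0,L),
\]
and taking $L\to\infty$ along a countable exhaustion completes the argument.

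The central obstacle is exactly this scale-splitting in $(3)\Rightarrow(4)$: one must carefully separate large and small radii, pass the large-scale information from $x_0$ to nearby points uniformly in $x$, and then invoke the weak-type inequality on a localized finite measure to control the residual small-radius piece. Once that decomposition is in place, each ingredient is classical.
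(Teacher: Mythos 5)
Your overall strategy mirrors the paper's: the same cyclic chain $(1)\Rightarrow(2)\Rightarrow(3)\Rightarrow(4)\Rightarrow(1)$, with $(1)\Rightarrow(2)$ and $(4)\Rightarrow(1)$ immediate, and $(2)\Rightarrow(3)$ via the ball inclusion $B(x,R)\subseteq B(x_0,R+d)$ and the vanishing of the volume-ratio correction as $R\to\infty$. The genuine difference is in $(3)\Rightarrow(4)$. The paper first fixes the localized measure $\mu_n:=\mu\mres B(0,n)$, which controls radii $R\leq r_0$ where $r_0$ depends on $x$; it then has to treat \emph{three} radius regimes, using (3) for $R\geq R_0$ and a crude monotonicity estimate $\mu(B(x,R))/|B(x,R)|\leq\mu(B(x,R_0))/|B(x,r_0)|$ to cover the intermediate gap $r_0<R<R_0$. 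You instead extract from (3) a \emph{single} threshold $R^{*}$ at $x_0$, propagate it uniformly over $x\in B(x_0,L)$ (picking up only a factor $2^n$ since $d<L\leq R^{*}\leq R$), and then choose the localized measure $\nu=\mu\mres B(x_0,L+R^{*})$ large enough to absorb the entire range $R\leq R^{*}$ at once. This eliminates the intermediate regime, reduces the split to two cases, and produces the explicit bound $M\mu\leq\max(M\nu,(K+1)2^n)$ a.e.\ on $B(x_0,L)$. Both arguments are correct; yours is a mild streamlining, buying a cleaner quantitative statement at the cost of having to check that $R^{*}$ can be taken $\geq L$ so that the factor $((R+d)/R)^n$ stays bounded by $2^n$ uniformly (which you do).
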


\begin{proof}
    $(4)\implies(1)$ is trivial. And $(1)\implies(2)$ holds by choosing the same value for $x_0$ in both cases.
    
    $(2)\implies(3)$. Suppose $(2)$ holds such that there exists $x_0\in\mathbb{R}^N$ with
    \[
        \limsup_{R\to\infty}\frac{\mu(B(x_0,R))}{|B(x_0,R)|}<\infty.
    \]
    Let $y$ be any point in $\mathbb{R}^N\setminus\{x_0\}$. Let $d=|x_0-y|$. For any $R>0$, we have that $B(y,R)\subseteq B(x_0,R+d)$. Therefore,
    \[
        \frac{\mu(B(y,R))}{|B(y,R)|}\leq \frac{|B(x_0,R+d)|}{|B(x_0,R)|}\frac{\mu(B(x_0,R+d))}{|B(x_0,R+d)|}.
    \]
    Taking the $\limsup$ on both sides, we obtain
    \[
        \limsup_{R\to\infty}\frac{\mu(B(y,R))}{|B(y,R)|}\leq\limsup_{R\to\infty}\frac{\mu(B(x_0,R))}{|B(x_0,R)|}.
    \]
    The other direction holds by interchanging the roles of $x_0$ and $y$. Thus, $(3)$ holds.

    $(3)\implies(4)$. Suppose $(3)$ holds. Then, note that for all $n\in\N$, $\mu_n:=\mu\llcorner B(0,n)$ is a finite Borel measure. Hence, $M\mu_n<\infty$ a.e. Let $E_1\subseteq B(0,1)$ be a measure zero set such that $(M\mu_1)(x)<\infty$ for all $x\in B(0,1)\setminus E_1$. Then, inductively choose $E_{n+1}\subseteq B(0,n+1)$ to be a measure zero set such that $E_n\subseteq E_{n+1}$ and $(M\mu_{n+1})(x)<\infty$ for all $x\in B(0,n+1)\setminus E_{n+1}$. Set $E=\bigcup_{i=1}^\infty E_i$. Then, $E$ has measure zero. Now, let $x\in\R^N\setminus E$. Then, $x\in B(0,n)\setminus E_n$ for some $n\in\N$. Let $r_0>0$ such that $B(x,r_0)\subseteq B(0,n)$. Then, for all $R\leq r_0$,
    \[
        \frac{\mu(B(x,R))}{|B(x,R)|}\leq (M\mu_n)(x)<\infty.
    \]
    Further, by (3), there exists some $R_0$ such that
    \[
        \frac{\mu(B(x,R))}{|B(x,R)|}<2K
    \]
    for all $R\geq R_0$. Finally, for all $R\in(r_0,R_0)$,
    \[
        \frac{\mu(B(x,R))}{|B(x,R)|}\leq\frac{\mu(B(x,R_0))}{|B(x,r_0)|}<\infty.
    \]
    Thus, $(M\mu)(x)<\infty$. Since $x$ was an arbitrary point in $\R^N\setminus E$, this implies (4).
\end{proof}

%%%%%%%%%%%%%%%%%%%%%%%%%%%%%%%%%%%%%%%%%%%%%%%%
%%%%%%%%%%%%%%%%%%%%%%%%%%%%%%%%%%%%%%%%%%%%%%%%
%%%%%%%%%%%%%%%%%%%%%%%%%%%%%%%%%%%%%%%%%%%%%%%%
%%%%%%%%%%%%%%%%%%%%%%%%%%%%%%%%%%%%%%%%%%%%%%%%
%%%%%%%%%%%%%%%%%%%%%%%%%%%%%%%%%%%%%%%%%%%%%%%%
%%%%%%%%%%%%%%%%%%%%%%%%%%%%%%%%%%%%%%%%%%%%%%%%

\newpage

\end{document}